\documentclass[11pt]{amsart}
\usepackage[pagewise]{lineno}
\usepackage{latexsym, amsmath, amssymb,amsthm,amsopn,amsfonts}
\usepackage{version}
\usepackage{epsfig,graphics,color,graphicx,graphpap}
\usepackage{amssymb}
\usepackage{todonotes}
\usepackage{listings}
\usepackage{mathrsfs}
\usepackage[citecolor=blue]{hyperref}

\begin{document}

\newtheorem{theorem}{Theorem}
\newtheorem{proposition}[theorem]{Proposition}
\newtheorem{lemma}[theorem]{Lemma}
\newtheorem{corollary}[theorem]{Corollary}

\theoremstyle{definition}
\newtheorem{definition}[theorem]{Definition}
\newtheorem{remark}[theorem]{Remark}
\newtheorem{example}[theorem]{Example}

\newcommand{\p}{\partial}













\title[]{Conformal boundary rigidity for simple Finsler metrics}

\author[K. Lam]{Kelvin Lam}
\address{Department of Mathematics, University of California Santa Barbara, Santa Barbara, CA 93106, USA}
\email{klam@math.ucsb.edu}

\author[H. Zhou]{Hanming Zhou}
\address{Department of Mathematics, University of California Santa Barbara, Santa Barbara, CA 93106, USA}
\email{hzhou@math.ucsb.edu}

\maketitle

\begin{abstract}
{In this paper we prove that simple Finsler manifolds are conformally stable. Given a simple Finsler manifold and a class of conformal factors, we characterize the singularity of their induced boundary distance functions which allows us to define an appropiate $H^2$ norm. We then obtain a stability estimate with respect to this Sobolev norm and the $L^2$ norm on the class of the conformal factors. To prove the main theorems, we adapt the classical integration by parts technique employed by Mukhometov \cite{Muhometov} to the Finsler setting.}
\end{abstract}

\section{Introduction}

We consider the {\it boundary rigidity problem}, which arises naturally in geophysics in an attempt to determine the inner structure of the Earth, i.e., the sound speed or the index of refraction, from measurements of travel times of seismic waves on the Earth’s surface. As an approximate mathematical model, let $D\subset \mathbb R^n$, $n\geq 2$, be a simply connected closed bounded domain with smooth boundary $\p D$. Most studies of seismology assume that the Earth is an isotropic elastic medium, i.e., the sound speed only depends on positions, which induces a Riemannian metric $g=n^2(x)dx^2$ on $D$ with $n(x)$ the index of refraction.
This is generally true for the Earth’s crust and mantle. However, in the Earth’s deep interior, the presence of anisotropy has been widely recognized \cite{Silver96, RW2017}. In anisotropic linear elasticity, the wave propagation is governed by the elastic wave equation. This elastic system can be microlocally decoupled into 3 diﬀerent polarizations \cite{SDH2002}, which correspond to 3 positive eigenvalues of the principal symbol of the elastic wave operator. It turns out that the largest eigenvalue, corresponding to the fastest polarization, is homogeneous of degree 2 w.r.t. the frequency variable, so gives rise to a Finsler metric. Therefore, the travel time of the fastest polarized wave (called the qP wave) is characterized by the length of geodesics w.r.t. this Finsler metric. Due to the connection between the linear elasticity and the Finsler geometry, the study on inverse problems in Finsler geometry has gained increasing attention in recent years.

In the current paper, we study the boundary rigidity problem for Finsler metrics. Recall that a Finsler metric $F$ is a  nonnegative function on the tangent bundle $TD$, that is a Minkowski norm on the tangent plan $T_xD$ for each $x\in D$ satisfying
\begin{enumerate}
    \item $F(x,\lambda v)=\lambda F(x,v)$ for all $\lambda>0$ and $(x,v)\in TD$;
    \item For any $(x,v)\in TD\setminus \{0\}$, the symmetric bilinear form 
    $$g_v: T_xD\times T_xD\to \mathbb R,\quad g_v(u,w):=\frac{1}{2}\frac{\p^2}{\p s\p t}F^2(x,v+su+tw)|_{s=t=0}$$
    is positive definite.
\end{enumerate}
Given a Finsler metric $F$ on $D$, we define the associated distance function $\tau: D\times D\to [0,\infty)$ by 
$$\tau (x,y):=\inf \{\int_0^T F(\gamma(t),\dot\gamma(t))\,dt\, :\, \gamma:[0,T]\to D \,\mbox{is a Lipschitz continuous curve}, \gamma(0)=x, \gamma(T)=y\}$$
for any $x,y\in D$. The restriction of $\tau$ on the boundary, i.e. $\tau|_{\p D\times \p D}$, is called the boundary distance function. We ask the question: {\it Can we determine the Finsler metric $F$ from its boundary distance function?}

It is well-known that even in the Riemannian case, the boundary rigidity problem is not solvable in general, and one needs to impose additional geometric conditions. One such condition is simplicity. We say that a Finsler metric $F$ on $D$ is {\it simple} if 
\begin{enumerate}
    \item Every pair of points on $D$ is connected by a unique Finsler geodesic;
    \item The boundary $\p D$ is strictly convex w.r.t. $F$.
\end{enumerate}
See section \ref{preliminaries} for more details.
In the meantime, the boundary rigidity problem for general Riemannian metrics has a natural gauge: any isometry that preserves the boundary will also preserve the boundary distance function. However, Finsler metrics can admit non-isometric perturbations that preserve boundary distances \cite{ivanov2013}, which makes it even harder to characterize the non-uniqueness in the Finslerian case. As an initial step towards addressing the Finslerian boundary rigidity problem, we focus on the case of simple Finsler metrics of the same conformal class, namely
$$F(x,v)=\lambda (x) F_0(x, v)$$
for a fixed Finsler metric $F_0$ and positive scaler functions $\lambda$.

\begin{definition}
  Let $F_0$ be a $C^5$ (away from the zero section) \textit{reversible} Finsler metric on $D$ (so $F_0(x,v)=F_0(x,-v)$ for any $(x,v)\in TD$), and let $\lambda_0,\lambda_m$ and $M$ be positive numbers such that $\lambda_0<\lambda_m$. We define $\Lambda(\lambda_0,\lambda_m, M)$ to be the set of all functions $\lambda\in C^3(D)$ satisfying:
  \begin{enumerate}
      \item the metric $F=\lambda F_0$ is a simple Finsler metric on $D$;
      \item $\lambda_0\leq \lambda(x)\leq \lambda_m$ for all $x\in D$;
      \item Let $\exp_x(v)$ denote the exponential map w.r.t. $F=\lambda F_0$ based at $x\in D$, which is a $C^1$ diffeomorphism onto $D$, $C^2$ away from the zero section, due to the simplicity assumption (see e.g. \cite{Shen}). We assume that 
      $$\|D_y \exp_x^{-1}(y)\|<M$$ 
      for any $x,y\in D$, where $\|\cdot\|$ denotes the operator norm on $\mathbb R^n$.
  \end{enumerate}
\end{definition}

Let $\nabla_x \tau(x,y)$ denote the (Euclidean) gradient of $\tau$ w.r.t. the first variable $x$, since our Finsler metric is reversible, $\nabla_x \tau(x,y)=\nabla_x \tau(y,x)$, knowing $\nabla_x \tau(x,y)$ is equivalent to knowing $\nabla \tau =(\nabla_x \tau,\nabla_y \tau)$. For the inverse problem, we are given the boundary restriction of $\tau$, i.e. $\tau|_{\p D\times\p D}$, so in practice the information that one can immediately obtain is the tangential gradient of $\tau$, denoted by $\nabla^T \tau(x,y)$, for $x,y\in \p D$, which is equivalent to $\nabla (\tau|_{\p D\times\p D})$, the boundary gradient.

In order to state our main result of the boundary rigidity for simple conformal Finsler metrics, we define the following weighted $L^1$-norm for $\nabla^T \tau$ on the boundary
\begin{equation}\label{weighted norm}
    \|\nabla^T\tau\|_{\mathcal L^1(\p D\times \p D)}:= \int_{\p D} \int_{\p D} |\nabla_x^T \tau(x,y)|\,\frac{dS_x dS_y }{|x-y|^{n-2}}
\end{equation}
where $dS_x$, $dS_y$ are surface measures of $\p D$ w.r.t. the variables $x$ and $y$ respectively. Throughout the paper, we denote $|\cdot|$ the standard Euclidean norm. The right hand side, as a singular integral, is integrable, see Lemma \ref{estimate1} in section \ref{preliminaries}.

\begin{theorem}\label{main theorem}
   Let $F_0$ be a $C^5$ reversible Finsler metric on $D$. Given any two functions $\lambda, \tilde\lambda\in \Lambda(\lambda_0,\lambda_m,M)$ with corresponding distance functions $\tau$ and $\tilde\tau$ respectively,
   \begin{equation}\label{stability estimate}
       \|\lambda-\tilde\lambda\|^2_{L^2(D)}\leq C \|\nabla^T(\tau-\tilde\tau)\|_{\mathcal L^1(\p D\times \p D)},
   \end{equation}
   where the constant $C$ depends only on $\lambda_0,\lambda_m,M,F_0$ and the domain $D$.
   
   In particular, given any two positive functions $\lambda, \tilde\lambda\in C^3(D)$ such that the Finsler metrics $\lambda F_0$ and $\tilde\lambda F_0$ are both simple, if $\tau|_{\p D\times\p D}=\tilde\tau|_{\p D\times\p D}$, then $\lambda=\tilde\lambda$.
\end{theorem}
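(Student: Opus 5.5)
The plan is to run Mukhometov's integration by parts argument in the Finsler setting. The backbone is the eikonal structure of the distance functions. For fixed $y\in D$ and $x\neq y$, simplicity makes $x\mapsto\tau(x,y)$ a $C^2$ function. Its differential satisfies the dual eikonal equation
$$F^*(x,d_x\tau(x,y))=1,$$
where $F^*=\lambda^{-1}F_0^*$ is the co-Finsler norm. So $F_0^*(x,d_x\tau)=\lambda(x)$, and likewise $F_0^*(x,d_x\tilde\tau)=\tilde\lambda(x)$. Moreover the Legendre transform $\ell_x$ of $F_0$ sends the unit geodesic direction $\xi(x,y)$ of $F$ (pointing away from $y$) to $d_x\tau=\lambda(x)\,\ell_x(\xi)$. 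A second ingredient is the Cauchy--Schwarz type inequality for Minkowski norms,
$$\langle \ell_x(\xi),\eta\rangle\le F_0(x,\eta)\quad\text{whenever } F_0(x,\xi)=1.$$
Equality holds only for $\eta=\xi$, and there is a quantitative strict convexity gap. This gives the key pointwise inequality
$$\lambda\tilde\lambda-\langle d_x\tau,\tilde v\rangle\ge 0,$$
where $\tilde v=\tilde\lambda\,\tilde\xi$ is the rescaled direction field of $\tilde F$. Uniform ellipticity of $g_v$ for $F_0\in C^5$ on compact $D$ upgrades this to $\ge c\,|\lambda\xi-\tilde\lambda\tilde\xi|^2$-type control. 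Combined with the identity $\lambda^2+\tilde\lambda^2-2\lambda\tilde\lambda=(\lambda-\tilde\lambda)^2$, this yields a pointwise lower bound by $(\lambda-\tilde\lambda)^2$.

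Concretely, I would consider for each $y\in\p D$ the expression
$$I(y)=\int_D\big(d_x(\tau-\tilde\tau)(x,y)\big)\big(\lambda\xi(x,y)-\tilde\lambda\tilde\xi(x,y)\big)\,\mu(x,y)\,dx,$$
with a weight $\mu$ chosen to make the vector fields divergence-free. The natural choice is the Jacobian density of geodesic polar coordinates centered at $y$: in those coordinates the field $\xi(\cdot,y)$ has zero divergence with respect to $\mu\,dx$ on $D\setminus\{y\}$. The singularity at $y\in\p D$ is of order $|x-y|^{1-n}$; it is integrable and is controlled using the bound $M$ on $D\exp^{-1}$. Integrate over $y$ (in Mukhometov's scheme, over pairs of boundary points parametrizing geodesics), expand the integrand, and use the two eikonal equations together with the Legendre inequality. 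The integrand then becomes bounded below by $c(\lambda-\tilde\lambda)^2$ times a density that is bounded below, by $\lambda_0,\lambda_m,M$. This gives $\int_{\p D} I(y)\,dS_y\ge c\|\lambda-\tilde\lambda\|^2_{L^2}$.

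On the other side, divergence-freeness lets me integrate by parts in $x$, moving the derivative off $\tau-\tilde\tau$. Only boundary terms survive:
$$\int_{\p D}(\tau-\tilde\tau)(x,y)\,\langle \lambda\xi-\tilde\lambda\tilde\xi,\nu\rangle\,\mu\,dS_x.$$
The excision of a small ball around the singular point $y$ contributes nothing in the limit, since $\tau-\tilde\tau$ vanishes at $x=y$. To reach the stated $\nabla^T$ norm, I would instead write the boundary integral in geodesic-angle coordinates and integrate by parts along $\p D$. Equivalently, I would use $\tau-\tilde\tau$ vanishing on the diagonal to express the boundary terms through $\nabla^T_x(\tau-\tilde\tau)$ integrated against kernels bounded by $C|x-y|^{2-n}$. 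The bound on these kernels uses $M$, $\lambda_0$, $\lambda_m$ and Lemma \ref{estimate1} for integrability. This yields $\le C\|\nabla^T(\tau-\tilde\tau)\|_{\mathcal L^1}$. The uniqueness statement then follows by applying the estimate with $M,\lambda_0,\lambda_m$ chosen to fit both $\lambda$ and $\tilde\lambda$.

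The main obstacle is the Finslerian replacement for the Riemannian algebraic step. Reversibility of $F_0$ is needed so that swapping the roles of $x$ and $y$ keeps the directions consistent. The harder part is getting the divergence-free weight and the strict-convexity gap to combine with constants depending only on $F_0$ and the class bounds. Here I would first try to work entirely on the unit sphere bundle, with the Liouville/Hilbert form of $F_0$. Its invariance under the geodesic flow replaces the Riemannian symmetry of the volume form.
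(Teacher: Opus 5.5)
\textbf{The pointwise inequality holds; the integration by parts does not.} With $\xi,\tilde\xi$ the $F_0$-unit directions, you have $d_x\tau=\lambda\ell_x(\xi)$ and $d_x\tilde\tau=\tilde\lambda\ell_x(\tilde\xi)$. The Legendre--Cauchy inequality then gives $\langle d_x\tau-d_x\tilde\tau,\lambda\xi-\tilde\lambda\tilde\xi\rangle\ge\lambda^2+\tilde\lambda^2-2\lambda\tilde\lambda=(\lambda-\tilde\lambda)^2$, as you claim.

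The gap is that the integration by parts needs $\mathrm{div}\bigl(\mu(\lambda\xi-\tilde\lambda\tilde\xi)\bigr)=0$ for a single weight $\mu$, and your weight cannot do this. In $F$-polar coordinates $(t,\theta)$ about $y$, any weight that makes the $F$-field divergence-free is a fixed density times $h(\theta)$, with $h$ constant along the $F$-geodesics from $y$. But $\tilde\xi$ is tangent to the $\tilde F$-geodesics from $y$, which are different curves, so $\mathrm{div}(\mu\tilde\lambda\tilde\xi)\neq0$ in general. You are left with an interior term $\int_D(\tau-\tilde\tau)\,\mathrm{div}(\mu\tilde\lambda\tilde\xi)\,dx$. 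It involves $\tau-\tilde\tau$ away from $\partial D$ and is not controlled by the boundary data. A weight adapted to the difference field itself would depend on both metrics in an uncontrolled way, and it degenerates where $\lambda\xi=\tilde\lambda\tilde\xi$. The Liouville form of $F_0$ does not help either: it is invariant under the flow of $F_0$, not under the flows of $F$ and $\tilde F$. So the obstacle you flag at the end is not a technicality. A coercive integrand with a common divergence-free weight is simply not available.

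\textbf{The missing idea is to give up pointwise coercivity.} This is how the paper proceeds.
\begin{enumerate}
\item Pair $d_x(\tau-\tilde\tau)$ only with the $F$-unit field $v$ of $F$, using the $F$-weight. Then $d_x(\tau-\tilde\tau)(v)=1-d_x\tilde\tau(v)\ge1-\tilde\lambda/\lambda$. This is a linear bound of indefinite sign.
\item Integrate over $y$, via Santal\'o's formula or the paper's change of variables $y\mapsto\eta=\nabla_x\tau$ onto $S_x^*D$. The weight becomes the Euclidean volume of the co-ball $\{F_0^*\le\lambda\}$, which is $\lambda^n$ times a factor depending only on $F_0$. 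This gives the analogue of Lemma \ref{inequality 1}.
\item Swap $\lambda$ and $\tilde\lambda$, now using the $\tilde F$-weight, and add. Positivity then comes from $(\lambda^{n-1}-\tilde\lambda^{n-1})(\lambda-\tilde\lambda)\ge(n-1)\lambda_0^{n-2}(\lambda-\tilde\lambda)^2$, not from a single integrand.
\end{enumerate}
The paper obtains the linear inequality from Mukhometov's identity, Lemma \ref{lemma2}, whose weight depends only on $\tau$. It applies the identity to $w=\tau-\tilde\tau$, $\tau$ and $\tilde\tau$, and polarizes, so the quadratic terms collapse to $2(n-1)(1-\lambda^{-2}a^{ij}\tau_{x_i}\tilde\tau_{x_j})$. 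Polarization is needed because the frozen Finsler form $\lambda^{-2}a^{ij}(x,\nabla_x\tau)w_{x_i}w_{x_j}$ need not dominate $(1-\tilde\lambda/\lambda)^2$.

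\textbf{Your final step is only asserted.} Converting $\iint(\tau-\tilde\tau)K\,dS_x\,dS_y$, with $K\sim|x-y|^{1-n}$, into $\nabla^T(\tau-\tilde\tau)$ against $O(|x-y|^{2-n})$ kernels requires writing $K$ as a tangential divergence. This is the null-Lagrangian (Piola) structure of a Jacobian determinant, and you would have to prove it. In the paper it is built in. The boundary term there is $B(\tau-\tilde\tau,\tau,G,\tau)-B(\tau-\tilde\tau,\tilde\tau,G,\tilde\tau)$. It already contains $\nabla_y(\tau-\tilde\tau)$, which is effectively tangential because $\nabla_yG$ is normal, and this is paired against $n-2$ factors $\tau_{x_iy_j}=O(|x-y|^{-1})$.
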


To the best of our knowledge, the above theorem provides the first uniqueness and stability estimates for the conformal Finslerian boundary rigidity problem. Due to the connection between the
linear elasticity and the Finsler geometry, the study of inverse problems in Finsler geometry has been very active in recent years, see e.g. \cite{ivanov2013, ivanov2014,Yernat2018,dehoop2022,dehoop2023,dehoop2025, dehoop2026}
and the references therein.

When $F_0$ is a Riemannian metric, the boundary rigidity was proved for conformal simple metrics \cite{Muhometov2D,Mukhometov1975,MR1978}, and stability estimates have been established in \cite{Mukhometov1975, Muhometov,Beylkin79,sharafutdinov1994,TarikereZhou}. It is a conjecture that general simple Riemannian metrics are boundary distance rigid \cite{Michel}. This has been proved in dimension two \cite{PesUhl2005}. In dimensions $\geq 3$, this is known for generic simple metrics \cite{StefUhl2005}. When caustics appear, a completely new approach was established in \cite{SUV2016, SUV2021} for the boundary rigidity problem in dimensions $\geq 3$, assuming a convex
foliation condition. We refer to \cite{SUVZ2019} for summaries of recent developments on the
boundary rigidity problem.

To prove Theorem \ref{main theorem}, we follow Mukhometov's ideas for the Riemannian case \cite{Muhometov} closely, but also make crucial modifications to accommodate the presence of the Finsler metric $F_0$. Thanks to the homogeneity property of Finsler metrics, we are able to establish similar type of stability estimates. Note that the stability estimates in \cite{Muhometov} rely on the full gradient $\nabla (\tau-\tilde\tau)$ and second order derivatives of $\tau-\tilde\tau$ on the boundary. However, we show that it's sufficient to use only the tangential gradient $\nabla^T(\tau-\tilde\tau)$ as in \eqref{stability estimate}. See also \cite[Theorem 8.5.1]{sharafutdinov1994} for a similar estimate in the Riemannian case.

When the dimension $n=2$, the singular weight in \eqref{weighted norm} vanishes, so the right hand side of the stability estimate \eqref{stability estimate} reduces to the standard $L^1$ norm on $\p D\times \p D$. Since $\p D\times\p D$ has finite volume, we can further bound the $L^1$ norm by the $L^2$ norm, namely
\begin{equation}\label{2D stability}
    \|\lambda-\tilde\lambda\|^2_{L^2(D)}\leq C\|\nabla^T(\tau-\tilde\tau)\|_{L^2(\p D\times\p D)},
\end{equation}
which is consistent with the original work by Mukhometov \cite{Muhometov2D} on the conformal boundary rigidity for simple Riemannian surfaces.

In dimension $n\geq 3$, if $\lambda$ and $\tilde\lambda$ are equal near the boundary, one can improve estimate \eqref{stability estimate} to \eqref{2D stability} as well. Let $\mathcal O$ be an open subdomain of $D$ so that $\mathcal O\subset \overline{\mathcal O}\subset D^o$, where $D^o$ is the interior of $D$, and let $\rho\in \Lambda (\lambda_0,\lambda_m,M)$ be fixed. We define the following subset of $\Lambda(\lambda_0,\lambda_m,M)$
$$\Lambda_{\mathcal O,\rho}(\lambda_0,\lambda_m,M):=\{\lambda\in \Lambda(\lambda_0,\lambda_m,M)\,:\, \lambda|_{D\setminus \mathcal O}=\rho\}.$$

\begin{corollary}\label{fixed near boundary}
   Let $F_0$ be a $C^5$ reversible Finsler metric on $D\subset \mathbb R^n$, $n\geq 3$. Given any two functions $\lambda, \tilde\lambda\in \Lambda_{\mathcal O,\rho}(\lambda_0,\lambda_m,M)$ with corresponding distance functions $\tau$ and $\tilde\tau$ respectively,
   $$\|\lambda-\tilde\lambda\|^2_{L^2(D)}\leq C \|\nabla^T(\tau-\tilde\tau)\|_{L^2(\p D\times \p D)},$$
   where the constant $C$ depends only on $\lambda_0,\lambda_m,M,F_0, \rho, \mathcal O$ and $D$.
\end{corollary}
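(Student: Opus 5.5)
The plan is to reduce the corollary to Theorem \ref{main theorem} by showing that, when $\lambda=\tilde\lambda=\rho$ on $D\setminus\mathcal O$, the part of the weighted integral \eqref{weighted norm} near the diagonal $x=y$ vanishes. The singular weight $|x-y|^{2-n}$ then stays bounded on what remains. Concretely, we look for $\delta>0$, depending only on $\rho,\mathcal O,D,F_0,\lambda_0,\lambda_m$, such that $\tau(x,y)=\tilde\tau(x,y)$ for all $x,y\in\p D$ with $|x-y|<\delta$. On that neighborhood of the diagonal the tangential derivative $\nabla_x^T(\tau-\tilde\tau)$ also vanishes identically, being the derivative of a function that vanishes on an open subset of $\p D\times\p D$.

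The key geometric step is to show that short boundary geodesics avoid $\mathcal O$.
\begin{itemize}
\item Since $\overline{\mathcal O}\subset D^o$ is compact, $d_0:=\mathrm{dist}_{\mathrm{Eucl}}(\overline{\mathcal O},\p D)>0$.
\item Let $x,y\in\p D$ and let $\gamma$ be the unique $\lambda F_0$-geodesic joining them. Its Finsler length is $\tau(x,y)\le \lambda_m c_1|x-y|$, where $c_1$ bounds $F_0$ on Euclidean unit vectors and we compare with a curve in $D$ of Euclidean length $\lesssim|x-y|$. Such a curve exists because $\p D$ is smooth and compact, so $D$ is uniformly locally connected near the boundary.
\item Every point $z$ on $\gamma$ satisfies $\lambda_0 c_0|z-x|\le \tau(x,z)\le\tau(x,y)$, where $c_0$ is a lower bound of $F_0$ on Euclidean unit vectors.
\item Hence $\gamma$ stays within Euclidean distance $C|x-y|$ of $x$, with $C$ independent of $\lambda\in\Lambda$. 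Choosing $\delta$ with $C\delta<d_0$ puts $\gamma$ inside $D\setminus\mathcal O$, where $\lambda=\rho=\tilde\lambda$.
\end{itemize}

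Then $\tilde\tau(x,y)\le L_{\tilde\lambda F_0}(\gamma)=L_{\lambda F_0}(\gamma)=\tau(x,y)$. Applying the same argument to $\tilde\gamma$, the $\tilde\lambda F_0$-geodesic, gives the reverse inequality, so $\tau=\tilde\tau$ on $\{|x-y|<\delta\}$. A bit of care is needed so that $\delta$ is uniform over the class; this holds because only $\lambda_0$, $\lambda_m$, $F_0$, $D$ and $\mathcal O$ enter.

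Finally, apply \eqref{stability estimate} and restrict the integral to $|x-y|\ge\delta$, where the weight is at most $\delta^{2-n}$:
$$\|\lambda-\tilde\lambda\|^2_{L^2}\le C\delta^{2-n}\int_{\p D}\int_{\p D}|\nabla^T_x(\tau-\tilde\tau)|\,dS_xdS_y\le C\delta^{2-n}|\p D|\,\|\nabla^T(\tau-\tilde\tau)\|_{L^2(\p D\times\p D)},$$
using Cauchy–Schwarz on the finite-measure space $\p D\times\p D$. The main obstacle is the uniform comparison $\tau(x,y)\lesssim|x-y|$ for boundary points, that is, the existence of short curves inside $D$. I would obtain it from a fixed tubular-neighborhood chart of the smooth compact boundary $\p D$.
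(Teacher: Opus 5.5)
Your proposal is correct and follows the same route as the paper: show $\tau=\tilde\tau$ on $\{(x,y)\in\p D\times\p D:\ |x-y|<\delta\}$, apply \eqref{stability estimate}, bound the weight by $\delta^{2-n}$ on the remaining region, and pass from $L^1$ to $L^2$ on the finite-measure space $\p D\times\p D$. The one difference is in how $\delta$ is obtained. The paper briefly appeals to convexity of $\p D$, while you use a Euclidean length comparison together with the two-sided minimization argument, which makes explicit that $\delta$ is uniform over $\Lambda_{\mathcal O,\rho}(\lambda_0,\lambda_m,M)$.
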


A similar setting for Riemannian metrics has been considered in \cite{TarikereZhou}.
\medskip

The structure of the paper is as follow: We first provide some geometric preliminaries for the Finsler geometry and inverse problems in section \ref{preliminaries} where we also characterize the singularity of the boundary distance functions. In section \ref{proof of main theorems} we prove the main results assuming a crucial integral identity, see Lemma \ref{lemma2}. 
We then prove the integral identity in section \ref{proof of lemma}.

\bigskip

\noindent {\bf Acknowledgments:} The authors would like to thank Prof John M Lee, Plamen Stefanov and Vladimir Sharafutdinov for helpful discussions. The authors are partly supported by the NSF grant DMS-2408369 and Simons Foundation Travel Support for Mathematicians MPS-TSM-00008046.

\section{Preliminaries}\label{preliminaries}

Given a Finsler metric $F$ on $D$, 
we denote
$$a_{ij}(x,v):=\frac{1}{2}\frac{\p^2}{\p v^i \p v^j} F^2(x,v),$$
then $a(x,v)=\sum_{i,j=1}^n a_{ij}(x,v)dx_i\otimes dx_j$ is called the {\it fundamental tensor} of $F$. Since $F$ induces a Minkowski norm on $T_xD$ for any $x\in D$, we get that $a(x,v)$ is positive definite, symmetric and homogeneous of degree 0 w.r.t $v$ on $TD\setminus \{0\}$. If $a$ is independent of $v$, it defines a Riemannian metric. We can define Finsler geodesics using the fundamental tensor $a$ in a way similar to the Riemannian case. In particular, Finsler geodesics are of constant speed, see e.g. \cite{Shen}. In this paper, we consider Finsler geodesics with unit speed.

We define the \textit{Legendre transformation}, which is the Finsler analog of the musical isomorphism for Riemannian metrics,
$$L:  TD\setminus \{0\}\to T^*D\setminus \{0\},\quad L(x,v):=a(x,v)\langle v,\cdot\rangle=(x, a_{ij}(x,v)v^i dx^j),$$
where $T^*D$ is the cotangent bundle.
Note that $L$ is a diffeomorphism. We define $F^*:=F\circ L^{-1}$ the co-Finsler metric on $T^*D$, whose fundamental tensor is denoted by $a^*$. Then $a$ and $a^*$ are connected by the following equality
$$a^{-1}=a^*\circ L,$$
where $a^{-1}:=(a^{ij})$ is the inverse matrix of the fundamental tensor $a=(a_{ij})$, i.e. $a^{ik} a_{kj}=\delta^i_j$ with $\delta^i_j$ the Kronecker delta function. See \cite{Shen, Mdahl} for more details.

Now let $G \in C^2(D)$ be a boundary defining function of $\partial D$, where $G(x) < 0$ for $x \in D^o$, $G(x)>0$ for $x\in \mathbb R^n\setminus D$, $G(x) = 0$ and $|\nabla G (x)| = 1$ for $x \in \p D$. We say that $\p D$ is strictly convex w.r.t. a Finsler metric $F$ if any Finsler geodesic $\gamma(t)$ w.r.t. $F$ tangent to $\p D$ at $t=0$ satisfies that $\p_t^2 G(\gamma(t))|_{t=0}>0$.

Recall that a Finsler metric $F$ on $D$ is simple if every pair of points is connected by a unique Finsler geodesic and $\p D$ is strictly convex w.r.t. $F$. So given any $x, y\in D$, the distance $\tau(x,y)$ is realized by a unique distance minimizing geodesic $\gamma_{x,y}$ connecting $x, y$ as follows
\begin{equation}\label{distance}
\tau(x,y)=\int_{\gamma_{x,y}} F(\gamma_{x,y}(t),\dot\gamma_{x,y}(t))\,dt.
\end{equation}
 Due to the simplicity assumption, the exponential map $\exp_x$ is a diffeomorphism onto $D$ at any $x\in D$, then $\tau(x,y)$ is also equal to $F(\exp_x^{-1}(y))$.

Now we consider the conformal Finsler metric $F=\lambda F_0$, with $a$ the fundamental tensor of the fixed metric $F_0$. By choosing an appropiate smooth geodesic variation and applying the first variation formula for Finsler metrics \cite[equation 5.6]{Shen}, we have the following equality:
\begin{equation}\label{Icanoeq}
    \tau_{x_i} (x,y)  = \lambda^2(x) a_{ij}(x,v(y,x))) v^j( y,x) ,  \ \ \ \  i=1,2,....n;
\end{equation}
where $v(y,x) = (v^1(y,x),..., v^n(y,x))$ is the unit vector based at $x$ tangent to the geodesic connecting $x$ and $y$, oriented away from $y$ and satisfies $\lambda^2(x) a_{ij}(x, v(y,x)) v^iv^j = 1$. 
Clearly we have $v(y,x) = -\frac{\exp^{-1}_x(y)}{\tau(x,y)}$.
Note that \eqref{Icanoeq} is the Legendre transformation of $v$ in coordinates. In other words, $\nabla_x \tau (x,y) = (\tau_{x_1}, ...\tau_{x_n})$ is identified as the co-vector $L(x,v(y,x))$.


The inverse of $L$ gives the following relations:
\begin{equation}\label{eq3}
    v^i (y,x) = \lambda^{-2}(x) a^{ij}(x, v(y,x)) \tau_{x_j}  
\end{equation}
\begin{equation}\label{eq2}
    \tau_{x_i}(x,y) v^i(y,x) = 1,
\end{equation}
\begin{equation}\label{ellipsoid}
    \lambda^{-2}(x)a^{ij}(x, v(y,x))\tau_{x_i} \tau_{x_j} = 1.
\end{equation}

We prove the following lemma that determines the singularity type of $\tau$ at the diagonal $x=y$:

\begin{lemma}\label{estimate1}
    Let $\tau$ be the distance function w.r.t. the Finsler metric $F=\lambda F_0$ for $\lambda\in \Lambda(\lambda_0,\lambda_m,M)$, then $\tau(x,y)$ is continuous on $D \times D$, twice differentiable off the diagonal. Moreover, there exist $C_1 , C_2 > 0 $ depending on $\lambda_0, \lambda_m, M, F_0$ and the diameter of $D$ such that:
    \begin{equation}\label{est1}
        |\tau_{x_i}| \leq C_1, \ \ |\tau_{y_i}| \leq C_1
    \end{equation}
    \begin{equation}\label{est2}
        |\tau_{x_i y_j}| \leq C_2|x-y|^{-1}
    \end{equation}
where $|\cdot|$ denotes the Euclidean norm.
\end{lemma}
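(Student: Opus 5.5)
Continuity and regularity come first. By simplicity, $\exp_x$ is a $C^1$ diffeomorphism onto $D$ that is $C^2$ away from the zero section. Since $\tau(x,y)=F(x,\exp_x^{-1}(y))$ and $F=\lambda F_0$ is continuous, $\tau$ is continuous on $D\times D$. I will also use that the map $(x,y)\mapsto \exp_x^{-1}(y)$ is $C^2$ off the diagonal; this follows from the inverse function theorem applied to $(x,v)\mapsto(x,\exp_x v)$, where the geodesic flow of the $C^3$ data is $C^2$. Since $F$ is $C^3$ away from the zero section, $\tau$ is then twice differentiable off the diagonal.

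For \eqref{est1}, I use the identity \eqref{Icanoeq}: $\nabla_x\tau(x,y)$ is the covector $L(x,v(y,x))$ with $\lambda^2 a_{ij}v^iv^j=1$. The fundamental tensor $a$ of $F_0$ is homogeneous of degree $0$ and continuous on the compact unit-sphere bundle of $D$. It therefore has uniform eigenvalue bounds $0<c_0\le a\le c_1$ depending only on $F_0$ and $D$. From $\lambda^2 a(v,v)=1$ and $\lambda\ge\lambda_0$ I get $|v|\le (\lambda_0^2 c_0)^{-1/2}$. Then $|\tau_{x_i}|\le \lambda_m^2 c_1 |v|\le C_1$. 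By reversibility, $\tau(x,y)=\tau(y,x)$, so the same bound holds for $\tau_{y_i}$.

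For \eqref{est2}, I differentiate \eqref{Icanoeq} in $y$:
\[
\tau_{x_iy_k}=\lambda^2(x)\big(\partial_{v^l}a_{ij}(x,v)\,v^j + a_{il}(x,v)\big)\,\partial_{y_k}v^l(y,x).
\]
The factor in parentheses is bounded uniformly, because $\partial_v a_{ij}$ is homogeneous of degree $-1$ and $|v|$ is bounded above and below; the lower bound $|v|\ge(\lambda_m^2c_1)^{-1/2}$ comes from the same normalization. So it suffices to show $|\partial_y v(y,x)|\le C|x-y|^{-1}$. I write $v(y,x)=-w/\tau$ with $w=\exp_x^{-1}(y)$ and $\tau=F(x,w)$. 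Then
\[
\partial_y v=-\frac{D_y w}{\tau}+\frac{w\,\partial_y\tau}{\tau^2}.
\]
Here $\|D_yw\|<M$ by condition (3) of $\Lambda$. Since $w=-\tau v$, I have $|w|\le C\tau$, so $|w\,\partial_y\tau|/\tau^2\le C C_1/\tau$. Both terms are therefore $O(\tau^{-1})$.

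The remaining step, and the main point needing care, is the comparison $\tau(x,y)\ge c|x-y|$ with $c$ depending only on $\lambda_0$ and $F_0$. I will take any curve from $x$ to $y$ in $D$. Its $F$-length is at least $\lambda_0\min_{|u|=1}F_0(z,u)$ times its Euclidean length, and its Euclidean length is at least $|x-y|$. The minimum is over the compact set $D\times S^{n-1}$ and is positive. Combining this comparison with the bound on $\partial_y v$ gives \eqref{est2}. The only subtlety I expect is justifying differentiability of $v$ in $y$ off the diagonal, which is the $C^2$ regularity of $\exp_x^{-1}$ noted above.
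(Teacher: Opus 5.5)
Your proposal is correct and follows essentially the same route as the paper. You bound $\nabla_x\tau$ via \eqref{Icanoeq} using uniform bounds on $a_{ij}$ and $|v|$ over the unit sphere bundle, write $v(y,x)=-\exp_x^{-1}(y)/\tau(x,y)$, differentiate in $y$, and close with condition (3) of $\Lambda$, \eqref{est1}, and the comparison $\tau(x,y)\ge c|x-y|$. Your refinements tighten this same argument: you bound $|\exp_x^{-1}(y)|=\tau|v|$ instead of using the paper's mean value argument (which tacitly needs a Euclidean path in $D$ of controlled length), you use the explicit lower bound on $|v|$, and you justify $C^2$ regularity by the inverse function theorem (and in fact $\partial_{v^l}a_{ij}v^j=0$ by Euler's theorem).
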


\begin{proof}
    Since Finslerian geodesics are of constant speed, $\tau(x,y) = \lambda (x) F_0(\exp^{-1}_x(y))$. Notice that $\lambda\in C^3(D)$ and $F_0$ is continuous on $TD$ and $C^5$ away from the zero section, so $\exp_x^{-1}$ is continuous on $D$ and $C^1$ on $D\setminus \{x\}$ \cite[theorem 11.1.1]{Shen}, thus $\tau$ is also continuous on $D\times D$.

Since $\lambda_0\leq \lambda \leq \lambda_m$ and $a_{ij}$ is bounded over the sphere bundle $\{(x,v) \in TD\, |\, F_0(x,v)=1 \}$, it is bounded over the sphere bundle 
$$SD=SD_{\lambda} := \{(x,v) \in TD\,|\, \lambda F_0(x,v)=1 \}$$
since $a_{ij}$ is 0-homogeneous. On the other hand, it's easy to see that $|v|$ is uniformly bounded for any $(x,v)\in SD$.
Estimate \eqref{est1} then follows from \eqref{Icanoeq}.

    Now \eqref{distance} implies that $\tau(x,y) \geq C |x-y|$ for $C>0$ that depends on the diameter of $D$, $\lambda_0$, and $F_0$. Hence we have 
    \begin{equation}\label{normest}
    \frac{1}{|\tau(x,y)|}\leq \frac{1}{C|x-y|} .
    \end{equation}
    By reversibility of $F$: $$v(y,x) = -\frac{\exp^{-1}_x(y)}{\tau(x,y)},$$ 
    by the mean value theorem $|\exp_x^{-1}(y)|$ is bounded by a constant that depends on the diameter of $D$ and $M$ since $\exp_x^{-1}(x)=0$. 

    Substituting the above equation into \eqref{Icanoeq}, we obtain
    \begin{equation} \label{tauzetabd}
        \tau_{x_i} (x, y)  = -\lambda^2(x) a_{ij}(x,v(y,x))) \frac{(\exp^{-1}_x(y))^j}{\tau(x,y)}.
    \end{equation}
Differentiating with respect to $y_k$ gives:
    \begin{equation}
\begin{split}
-\tau_{x_i y_k} 
&=
\lambda^2(x) \partial_{v^l} a_{ij}(x,v(y,x)))  \frac{ \partial_{y_k}(\exp^{-1}_x(y))^l}{\tau(x,y)} v^j(y,x) - \lambda^2(x) \partial_{v^l} a_{ij}(x,v(y,x)))  \frac{\tau_{y_k}}{\tau(x,y)} v^l(y,x) v^j(y,x)
\\
&+
\lambda^2(x) a_{ij}(x,v(y,x)))  \frac{ \partial_{y_k}(\exp^{-1}_x(y))^j}{\tau(x,y)} - \lambda^2(x)  a_{ij}(x,v(y,x)))  \frac{\tau_{y_k}}{\tau(x,y)} v^j(y,x).
\end{split}
\end{equation}
We have shown that $a_{ij}(x,v(y,x)))$ is uniformly bounded, so is the derivative $ \partial_{v} a_{ij}(x,v(y,x)))$ since the sphere bundle $SD_\lambda$ is uniformly away from the zero section. 
Since $\lambda\in \Lambda(\lambda_0,\lambda_m,M)$, $\partial_{y}\exp^{-1}_x(y)$ is bounded. Together with \eqref{est1} and \eqref{normest}, this completes the proof.
\end{proof}

Recall the $\mathcal L^1$-norm defined in the introduction
\begin{equation}
    ||\nabla^T\tau||_{\mathcal L^1(\p D \times \p D)} = \int_{\p D}  \int_{\p D} |\nabla^T_x \tau(x,y)|\, \frac{dS_x dS_y}{|x-y|^{n-2}},
\end{equation}
by Lemma \ref{estimate1} we see that the integrand has a singularity of the type $|x-y|^{-(n-2)}$ which is integrable, thus the norm is well defined.



\section{Proof of main Theorems}\label{proof of main theorems}

The main ingredient in the proof of Theorem \ref{main theorem} is the following lemma. The Riemannian version was established by Mukhometov in \cite{Muhometov}. Given a matrix $M$, we denote its determinant by $\det M:=|M|$.

\begin{lemma}\label{lemma2}
Let $F_0$ be a $C^5$ reversible Finsler metric on $D$, and let $\lambda\in \Lambda(\lambda_0,\lambda_m,M)$ with corresponding distance functions $\tau$.
    Let $w(x,y)$ be a continuous function on $D \times D$ with continuous partial derivatives in $x$ and $y$ up to order three away from the diagonal, with first derivatives bounded and second derivatives having possibly a $|x-y|^{-1}$ singularity. Then the following identity holds:
\begin{equation}\label{mainlemma2}
\begin{split}
2(-1)^n \int_{\p D} dS_{y} \int_D B(\mathcal{L}w, w, \tau, \tau) dx = \int_D dx\int_{S_x^*D}(\eta_l \tilde{v}^l)\bigg[(n-2) (\mathcal{L}w)^2 \\
+
\lambda^{-2}(x)a^{ij}(x, \eta) w_{x_i}w_{x_j}\bigg]_{y= y(x,\eta)} dS_\eta  + (-1)^n \int_{\p D} dS_y \int_{\p D} B(w,w,G,\tau) dS_x.
\end{split}
\end{equation}
where $\mathcal{L}w = w_{x_i} v^i$, and 
\begin{equation}\label{multilinear form}
    B(f, \phi, \psi, \tau)(x,y) =   \begin{vmatrix}
    0 & 0 & \phi_{x_1} & \cdots & \phi_{x_n} \\
    0 & 0 & \psi_{x_1} & \cdots & \psi_{x_n} \\
    f_{y_1} & G_{y_1}& \tau_{x_1 y_1} & \cdots & \tau_{x_n y_1} \\
    \vdots & \vdots & \vdots & \ddots & \vdots \\
   f_{y_n} & G_{y_n}& \tau_{x_1 y_n} & \cdots & \tau_{x_n y_n} \\ 
\end{vmatrix}. 
\end{equation}
Here $G$ is the boundary defining function, $v$ is the vector at $x$ tangent to the geodesic connecting $x \in S$ and $y \in S$, pointing away from $y$, with $\lambda(x)^2 a_{ij}(x,v)v^iv^j = 1 $ (so $\lambda F(v) = 1$). And $\tilde{v} = v (\sum_{i=1}^n (v^i)^2)^{-\frac{1}{2}}$, i.e. $\tilde v=v/|v|$ with $|\cdot|$ the Euclidean norm. The measures $dS_x$ and $dS_y$ are the surface measure of $S$ in $\mathbf{R}^n$. $\partial \Omega \subset T^*_x D \cong \mathbf{R}^n$ is the ellipsoid given by the equation $\lambda(x)^{-2} a^{ij}(x,\eta)\eta_i\eta_j = 1 $ where $\eta \in T^*_x D$ is given in terms of the coordinates of $T^*_x D \subset T^*_x \mathbf{R}^n \cong \mathbf{R}^n$, where $\Omega$ is the set $\lambda(x)^{-2} a^{ij}(x,\eta)\eta_i\eta_j \leq 1 $
and $dS_\eta$ is the surface measure of $\partial \Omega $ as a subset of $\mathbf{R}^n \cong T_x D$. Where $y(x,\eta)$ is the inverse of the diffeomorphism  $\eta = \nabla_x \tau (y,x)$ , $y \in S$ given by equation eq \eqref{eq3} for a fixed point $x \in D$.
\end{lemma}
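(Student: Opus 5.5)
The plan is to follow Mukhometov's integration-by-parts scheme. The determinant $B(f,\phi,\psi,\tau)$ is a divergence-type expression in both variables, and all geometric content enters through the relations \eqref{Icanoeq}--\eqref{ellipsoid}. The first step is an algebraic identity for the integrand. Expanding the determinant \eqref{multilinear form} by its first two rows writes $B$ as a sum of $2\times 2$ minors $(\phi_{x_i}\psi_{x_j}-\phi_{x_j}\psi_{x_i})$ times cofactors of the lower $n\times(n+2)$ block. Those cofactors are alternating sums of products of $\tau_{x_k y_l}$, $f_{y_l}$ and $G_{y_l}$. Since every entry $\tau_{x_k y_l}$ is an exact derivative, the null-Lagrangian (Piola) identity lets us write $B(\mathcal L w,w,\tau,\tau)$ as a divergence in $x$ plus a divergence in $y$ plus a remainder. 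I would compute this carefully with $\psi=\tau$. Differentiating \eqref{eq2} and \eqref{ellipsoid} in $y$ shows that the rows $\tau_{x\,y_l}$ span the tangent space of the ellipsoid $\partial\Omega$ at $\eta=\nabla_x\tau$. In the $x$-direction, this produces the factor $(n-2)(\mathcal L w)^2+\lambda^{-2}a^{ij}w_{x_i}w_{x_j}$. The quadratic form appears because $\mathcal L w=w_{x_i}v^i$ and $v^i=\lambda^{-2}a^{ij}\tau_{x_j}$ by \eqref{eq3}. The step needing the Finsler modification is the $v$-dependence of $a^{ij}(x,v)$. Here I would use $0$-homogeneity, $\partial_{v^k}a_{ij}v^k=0$ (Euler), together with the symmetry of $\partial_{v^k}a_{ij}$ in all three indices. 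Together these kill the extra terms arising when $\partial_y$ hits $a^{ij}(x,v(y,x))$.

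Next, integrate over $x\in D$ with $y\in\partial D$ fixed, and then over $y\in\partial D$. The $y$-divergence terms are handled on $\partial D$ using the $G_{y}$ column. The minor containing $G_{y_l}$ is the normal component, so Stokes on the closed manifold $\partial D$ makes the tangential-divergence terms vanish. What survives produces, after integrating by parts in $x$ over $D$, the boundary term $(-1)^n\int_{\partial D}\int_{\partial D}B(w,w,G,\tau)\,dS_x\,dS_y$. Here $G_x$ appears as the outward normal since $|\nabla G|=1$ on $\partial D$. The factor $2$ on the left comes from $\partial_{x}(w^2)=2w\,w_x$ in the integration by parts.

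The remaining interior term is $\int_D dx\int_{\partial D}(\cdots)\,dS_y$, and I would change variables $y\mapsto\eta=\nabla_x\tau(y,x)\in\partial\Omega$. This map is a diffeomorphism $\partial D\to\partial\Omega$ by simplicity and \eqref{eq3}. Its Jacobian, relative to $dS_y$ and $dS_\eta$, equals the bordered determinant of $\tau_{x y}$ with the normal $G_y$ column, divided by the normal factors. The normal to $\partial\Omega$ at $\eta$ is proportional to $a^{ij}\eta_j\propto v$, and $\tau_{x_i}v^i=1$ by \eqref{eq2}. Together these yield precisely the weight $\eta_l\tilde v^l$ with $\tilde v=v/|v|$. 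The sign $(-1)^n$ comes from moving the two bordered columns into place.

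Finally, the singularity must be justified. Near $x=y$, \eqref{est2} gives $|\tau_{xy}|\lesssim|x-y|^{-1}$, so $B$ is of order $|x-y|^{-(n-1)}$, which is integrable in $x$ over the $n$-dimensional $D$. I would therefore perform all integrations by parts on $D\setminus B_\varepsilon(y)$ and let $\varepsilon\to0$. The flux through $\partial B_\varepsilon(y)\cap D$ is $O(\varepsilon^{n-1}\cdot\varepsilon^{-(n-1)}\cdot\varepsilon)\to0$, using the bounded first derivatives of $w$ and of $\tau$ from \eqref{est1}. I expect the main obstacle to be the first step: getting the exact algebraic identity with the anisotropic tensor $a^{ij}(x,v)$ and the correct coefficient $(n-2)$, since this is where the Riemannian computation must be redone using homogeneity.
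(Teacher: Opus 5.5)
Your overall strategy is the paper's. It uses Mukhometov's determinant integration by parts, and the only Finsler input is Euler's relation plus the total symmetry of $\p_v a$; this is exactly how the paper kills the terms where $\p_y$ hits $a^{ij}(x,\nabla_x\tau)$. It also uses the change of variables $y\mapsto\eta=\nabla_x\tau$, whose weight $\eta_l\tilde v^l$ comes from the normal to $\p\Omega$ being proportional to $v$ together with $\tau_{x_i}v^i=1$, and excision of a small ball around $x=y$. However, your account of how the identity is assembled is wrong at the point that fixes the coefficients, and that is precisely the step you defer. The factor $2$ does not come from $\p_x(w^2)=2ww_x$: no $w^2$ is ever differentiated. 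The $x$-divergence in the paper is of the bilinear field $\sum_{i,j,k}(-1)^{i+j+k}w_{x_j}w_{y_k}P_{kij}\sigma_{ij}$ (with $P_{kij}$ minors of $B_{11}$). That field is what becomes $(-1)^n\int_{\p D}\int_{\p D}B(w,w,G,\tau)$.

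In the paper, $\int_{\p D}B\,dS_y$, with $B=B(\mathcal Lw,w,\tau,\tau)$, is evaluated in two ways and the results are added.

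\emph{First evaluation.} Apply the product rule in $y$ to the determinants $B_i$, whose row $i+2$ is the undifferentiated $(\mathcal Lw,G,\tau_{x_1},\dots,\tau_{x_n})$. This gives $nB=\sum_i\p_{y_i}B_i-\sum_i(C_i+D_i)-\sum_{i\neq j}A_{ij}$. Using $A_{ij}=-A_{ji}$, $\int_{\p D}\sum_i\p_{y_i}B_i\,dS_y=\int_{\p D}B\,dS_y$ and $\sum_iD_i=(n-1)\mathcal Lw\,B_{21}-(n-2)B$, one gets $\int_{\p D}B=-\int_{\p D}(\sum_iC_i+(n-1)\mathcal Lw\,B_{21})$.

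\emph{Second evaluation.} Split $(\mathcal Lw)_{y_k}=w_{x_iy_k}v^i+w_{x_i}v^i_{y_k}$, so that $B=H+I$. In $I$, add $-\lambda^{-2}a^{ij}w_{x_j}$ times the $\tau$-columns to the first column and use the Euler/Cartan cancellation; this gives $I=\mathcal Lw\,B_{21}-B_{11}\lambda^{-2}a^{ij}w_{x_i}w_{x_j}$.

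\emph{Combining.} Adding the two evaluations produces the $2$ on the left, the coefficient $n-2=(n-1)-1$ in front of $\mathcal Lw\,B_{21}$, and the quadratic form. The remainder is $\sum_iC_i-H$, and it is this specific combination that is an exact $x$-divergence. The $B_{21}$ and $B_{11}$ terms are then sent to the $\p\Omega$ integral via Cauchy--Binet: $B_{21}|N|=(-1)^{n-1}\langle\tilde N,\nabla_xw\rangle$, and likewise with $\nabla_x\tau$ for $B_{11}$.

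You do mention the ingredients of the $I$ computation (the Euler cancellation and $v^i=\lambda^{-2}a^{ij}\tau_{x_j}$). What you are missing is the second evaluation of $\int_{\p D}B$ that they feed into. Your scheme of ``Stokes in $y$ removes the divergences, then one integration by parts in $x$'' only reproduces the first evaluation. That leaves the coefficient $n-1$, no quadratic-form term, and $\sum_iC_i$ in place of the divergence combination $\sum_iC_i-H$. So the stated identity does not come out of your sketch as written.
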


By Lemma \ref{estimate1} and the regularity assumption on $w$, $B(\mathcal L w,w,\tau,\tau)$ has a singularity of the type $|x-y|^{n-1}$, and $B(w,w,G,\tau)$ has a singularity of the type $|x-y|^{n-2}$, therefore all the integrals in \eqref{mainlemma2} are well-defined.

We defer the proof of Lemma \ref{lemma2} to section \ref{proof of lemma}. Applying Lemma \ref{lemma2}, we can prove the following results which are crucial in the proof of the main theorems.

\begin{lemma}\label{inequality 1}
    Let $F_0$ be a $C^5$ reversible Finsler metric on $D$. Given any two functions $\lambda, \tilde\lambda\in \Lambda(\lambda_0,\lambda_m,M)$ with corresponding distance functions $\tau$ and $\tilde\tau$ respectively, we have the following inequality:
    \begin{equation}
        C\int_D \lambda^{n-1}(\lambda-\tilde{\lambda})\,dx \leq \frac{(-1)^{n-1}}{n-1} \int_{\p D} dS_x\int_{\p D} B(\tau-\tilde{\tau}, \tau, G, \tau)\, dS_y
    \end{equation}
    where $C$ is a positive constant depending on $F_0$.
\end{lemma}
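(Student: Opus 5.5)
Apply Lemma \ref{lemma2} with $w=\tau-\tilde\tau$, regarded as a function of $(x,y)\in D\times D$. By Lemma \ref{estimate1}, applied to both $\lambda$ and $\tilde\lambda$, this $w$ is continuous, has bounded first derivatives and second derivatives with at most a $|x-y|^{-1}$ singularity. So the lemma applies. The plan is to evaluate the three pieces of \eqref{mainlemma2} using pointwise identities for $\mathcal L w$, and to arrive at the inequality by discarding nonnegative terms.

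The first key computation is $\mathcal L w$. By \eqref{eq2}, $\tau_{x_i}v^i=1$, so
\[
\mathcal L w=1-\tilde\tau_{x_i}v^i .
\]
Here $\nabla_x\tilde\tau$ lies on the $\tilde\lambda$-ellipsoid $\tilde\lambda^{-2}a^{ij}(x,\cdot)\eta_i\eta_j=1$, while $v$ is $\lambda F_0$-unit. The Finslerian Cauchy--Schwarz inequality (the pairing of a covector with a vector is bounded by $F_0^*(\xi)F_0(v)$) gives
\[
|\tilde\tau_{x_i}v^i|\le \tilde\lambda(x)F_0(v)=\tilde\lambda/\lambda .
\]
Hence $\mathcal L w\ge 1-\tilde\lambda/\lambda=(\lambda-\tilde\lambda)/\lambda$. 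Similarly, $w_{x_i}$ is the difference of two covectors, so the quadratic term $\lambda^{-2}a^{ij}(x,\eta)w_{x_i}w_{x_j}\ge0$. Since $\eta_l\tilde v^l=1/|v|>0$ by \eqref{eq2}, the whole first volume integral on the right of \eqref{mainlemma2} is nonnegative when $n\ge2$, and we drop it. The task then reduces to understanding the left side $\int_{\p D}dS_y\int_D B(\mathcal Lw,w,\tau,\tau)\,dx$.

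Next, use multilinearity of $B$ in the determinant rows. I would expect the authors to apply Lemma \ref{lemma2} a second time, with $w$ replaced by $\tau$ itself, or by a linear combination $\tau\pm w$, so as to polarize. For $w=\tau$ we have $\mathcal L\tau=1$, and the right side becomes an explicit volume $\int_D dx\int_{\p\Omega}(\eta_l\tilde v^l)(n-1)\,dS_\eta$. This quantity is comparable to $\int_D\lambda^{n-1}dx$, because $\p\Omega$ is $\lambda$ times the fixed $F_0^*$-unit sphere. The constant $C$, depending only on $F_0$, should arise exactly from the $\lambda$-homogeneity $dS_\eta\sim\lambda^{n-1}$ together with $\eta_l\tilde v^l\sim\lambda$ scaling. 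Polarizing the identity for the pair $(\tau,w)$, the mixed terms produce $\int_D dx\int_{\p\Omega}(\eta\cdot\tilde v)\,\mathcal Lw\,dS_\eta$, which by the lower bound $\mathcal L w\ge(\lambda-\tilde\lambda)/\lambda$ dominates $C\int_D\lambda^{n-1}(\lambda-\tilde\lambda)\,dx$. The boundary term $B(w,\tau,G,\tau)$ or $B(\tau,w,G,\tau)$, with the sign $(-1)^{n-1}/(n-1)$ coming from rearranging the $(n-2)$ and quadratic contributions, gives the right-hand side. The interior term with $B(\mathcal L\,\cdot,\cdot,\tau,\tau)$ for linear $\mathcal L$-data must vanish, or reduce to boundary data. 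I would check this via the fact that $B(1,\cdot,\tau,\tau)$ involves a column of $f_{y}=0$.

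The main obstacle is this bookkeeping: making the polarized identity produce exactly $B(\tau-\tilde\tau,\tau,G,\tau)$, with the stated coefficient, and verifying that the dropped quadratic terms have the right sign. In particular, the term $(n-2)(\mathcal Lw)^2$ must be combined with the linear term through the inequality $x^2\ge$ (linear) rather than simply discarded. I would first try the choice $w=\tau-\tilde\tau$ directly and expand $(\mathcal Lw)^2=\mathcal Lw-\mathcal Lw\,\tilde\tau_{x_i}v^i$, using $|\tilde\tau_{x_i}v^i|\le\tilde\lambda/\lambda$ to control the sign.
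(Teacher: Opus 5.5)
Your middle paragraph has the right mechanism, and it is the paper's. Applying Lemma~\ref{lemma2} to $\tau-\tilde\tau$, $\tau$, $\tilde\tau$ and forming (first)$+$(second)$-$(third), as the paper does, is exactly polarizing the quadratic identity on the pair $(\tau,w)$, since $\tilde\tau=\tau-w$. However, the proposal never carries the polarization out, and the steps framing it would fail.

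Dropping the nonnegative volume integral for $w=\tau-\tilde\tau$ alone throws away the only source of the lower bound. What remains on the left is $\int\!\!\int B(\mathcal L w,w,\tau,\tau)=\int\!\!\int B(\mathcal L\tilde\tau,\tilde\tau,\tau,\tau)$, an interior quantity that is not boundary data. Your closing plan fails for the same reason: using $w$ alone and handling $(n-2)(\mathcal Lw)^2$ by an inequality still leaves that interior integral, since a single application keeps it on the left of \eqref{mainlemma2}.

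In the polarized identity no quadratic terms survive, so the ``main obstacle'' you name does not arise. Since $\mathcal L\tau=1$ and $\lambda^{-2}a^{ij}(x,\eta)\eta_i w_{x_j}=\mathcal Lw$ by \eqref{eq3}, the cross term of the volume integral is exactly
\[
2\int_D dx\int_{\p\Omega}(\eta_l\tilde v^l)\big[(n-2)\,\mathcal L\tau\,\mathcal Lw+\lambda^{-2}a^{ij}(x,\eta)\tau_{x_i}w_{x_j}\big]dS_\eta=2(n-1)\int_D dx\int_{\p\Omega}(\eta_l\tilde v^l)\,\mathcal Lw\,dS_\eta .
\]
Your Cauchy--Schwarz bound $\mathcal Lw\ge 1-\tilde\lambda/\lambda$ is then the only inequality needed.

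What is genuinely missing are the two cancellations you defer as ``bookkeeping''.

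The cross term on the left of \eqref{mainlemma2} is $B(\mathcal L\tau,w,\tau,\tau)+B(\mathcal Lw,\tau,\tau,\tau)$. The first vanishes because its first column $\nabla_y\mathcal L\tau$ is zero, as you note. The second vanishes for a different reason: its first two rows are both $(0,0,\nabla_x\tau)$.

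The boundary cross term is $B(w,\tau,G,\tau)+B(\tau,w,G,\tau)$. To reach the stated right-hand side with coefficient $(-1)^{n-1}/(n-1)$, you must show these two have equal integrals over $\p D\times\p D$. This follows by transposing the determinant and exchanging $x\leftrightarrow y$, which gives $B(\tau,w,G,\tau)(x,y)=B(w,\tau,G,\tau)(y,x)$. That step uses $\tau(x,y)=\tau(y,x)$ and $\tilde\tau(x,y)=\tilde\tau(y,x)$, i.e.\ the reversibility of $F_0$, a hypothesis your proposal never invokes.

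With both cancellations in hand you get
\[
(n-1)\int_D dx\int_{\p\Omega}(\eta_l\tilde v^l)\,\mathcal Lw\,dS_\eta=(-1)^{n-1}\int_{\p D}\int_{\p D}B(\tau-\tilde\tau,\tau,G,\tau)\,dS_x\,dS_y .
\]
The divergence theorem then gives $\int_{\p\Omega}\eta_l\tilde v^l\,dS_\eta=n\lambda^n|B^*_{F_0,x}|$. Here $\tilde v$ is the outward unit normal of $\p\Omega=\lambda\,\p B^*_{F_0,x}$, and $B^*_{F_0,x}=\{F_0^*(x,\cdot)\le 1\}$. So the scaling is $\lambda^n$, not $\lambda^{n-1}$, and $(1-\tilde\lambda/\lambda)\lambda^n=\lambda^{n-1}(\lambda-\tilde\lambda)$.

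One caveat is shared with the paper. $|B^*_{F_0,x}|$ depends on $x$, and bounding it below by a constant after multiplying by $\lambda-\tilde\lambda$ is only valid where $\lambda\ge\tilde\lambda$. It is safer to keep this weight inside the integral until the $\lambda\leftrightarrow\tilde\lambda$ symmetrized inequality is formed.
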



\begin{proof}
Notice that
$$\mathcal L \tau=\tau_{x_i}v^i=1,$$
where $v=v(y,x)$ is the unit vector w.r.t. the metric $\lambda F_0$.
By the definition of the multilinear form $B$, we have 
    $$ B(\mathcal L(\tau-\tilde\tau),\tau-\tilde\tau,\tau,\tau) + B(\mathcal L \tau,\tau,\tau,\tau)-B(\mathcal L \tilde\tau,\tilde\tau,\tau,\tau) =0$$
By Lemma \ref{lemma2}, let $w$ be $\tau-\tilde\tau$, $\tau$ and $\tilde\tau$ respectively, the above equation implies that
\begin{equation}\label{eq 16}
\begin{split}
    \int_{D} dx \int_{S_x^*D} (\eta_l \tilde{v}^l) [(n-2) (L(\tau-\tilde\tau))^2 + \lambda^{-2}a^{ij}(x,\eta)(\tau-\tilde\tau)_{x_i}(\tau-\tilde\tau)_{x_j}] +(n-1) \\
    -(n-2)(L\tilde{\tau})^2-\lambda^{-2}a^{ij}(x,\eta)\tilde\tau_{x_i}\tilde\tau_{x_j}]_{y = y(x,\eta)} dS_\eta \\
    =(-1)^{n-1}\int_{\p D} \int_{\p D} B(\tau-\tilde\tau, \tau-\tilde\tau, G, \tau) + B(\tau,\tau,G,\tau) -B(\tilde{\tau},\tilde{\tau},G,\tau)\,dS_x dS_y
\end{split}
\end{equation}

It's easy to check that the integrand on the RHS of \eqref{eq 16} satisfies

\begin{equation}\label{82}
    B(\tau-\tilde\tau, \tau-\tilde\tau, G, \tau) + B(\tau,\tau, G,\tau) - B(\tilde{\tau}, \tilde{\tau}, G, \tau) = B(\tau-\tilde{\tau}, \tau, G, \tau ) + B(\tau, \tau-\tilde{\tau}, G, \tau )
\end{equation}
The structure of $B$ also suggests that
$$\int_{\p D} \int_{\p D} B(\tau-\tilde{\tau}, \tau, G, \tau ) \,dS_x dS_y=\int_{\p D} \int_{\p D} B(\tau, \tau-\tilde{\tau}, G, \tau ) \,dS_x dS_y$$
Therefore, the RHS of \eqref{eq 16} is equal to
$$2(-1)^{n-1}\int_{\p D}\int_{\p D} B(\tau-\tilde{\tau}, \tau, G, \tau ) \,dS_x dS_y.$$

To analyze the LHS of \eqref{eq 16}, notice that
\begin{align*}
    (\mathcal L(\tau-\tilde\tau))^2 & = 1-2\mathcal L\tau \mathcal L\tilde{\tau}+(\mathcal L\tilde{\tau})^2\\
\lambda^{-2}a^{ij}(x,\eta)(\tau-\tilde\tau)_{x_i}(\tau-\tilde\tau)_{x_j} & = 1 + \lambda^{-2}a^{ij}(x,\eta) \tilde\tau_{x_i}\tilde\tau_{x_j}- 2\lambda^{-2}a^{ij}(x,\eta)\tau_{x_i}\tilde\tau_{x_j}\\
\mathcal L\tau \mathcal L\tilde{\tau} & = \mathcal L\tilde{\tau} = \tilde{\tau}_{x_i}v^i=\lambda^{-2}a^{ij}(x,\eta)\tau_{x_i}\tilde\tau_{x_j}.
\end{align*}
Thus the LHS of \eqref{eq 16} is equal to 
$$2(n-1)\int_D dx \int_{S_x^*D} (\eta_l \tilde v^l) (1-\lambda^{-2} a^{ij}(x,\eta) \tau_{x_i}\tilde{\tau}_{x_j} )|_{y = y(x,\eta)} dS_\eta.$$
Since $\nabla \tau= \eta$, we apply Cauchy's inequality (\cite[Lemma 1.2.3]{Shen}) to conclude:
\begin{equation} \label{86}
    a^{ij}(x,\nabla \tau) \tau_{x_i}\tilde{\tau}_{x_j}  \leq F_0^*(x,\nabla \tau)F_0^*(x,\nabla \tilde{\tau}) = \lambda \tilde{\lambda}
\end{equation}
where the last equality follows since $\nabla \tau\in S_x^*D_\lambda$ and $\nabla \tilde{\tau}\in S_x^*D_{\tilde\lambda}$. 
Recall that the vector $\tilde{v}$ coincides with the exterior normal on $S^*D_\lambda$,
so we have 
\begin{equation}\label{87}
\eta_i \tilde{v}^i > 0 
\end{equation}
By the inequalities \eqref{86} and \eqref{87}, we obtain 
$$2(n-1)\int_D dx \int_{S_x^*D} (\eta_l \tilde v^l) (1-\lambda^{-2} a^{ij}(x,\eta) \tau_{x_i}\tilde{\tau}_{x_j} )|_{y = y(x,\eta)} dS_\eta\geq 2(n-1) \int_D (1-\frac{\tilde{\lambda}}{\lambda})dx \int_{S_x^*D} \eta_i \tilde{v}^i dS_\eta$$
Applying the divergence theorem to the last integral on the RHS, 
$$\int_{S_x^*D} \eta_i\tilde v^i\,dS_\eta= \int_{B^*_xD} d \eta_1\cdots d\eta_n,$$
where $B^*_xD=\{\eta\in T^*_xD\,:\, \lambda^{-1} F_0^* (x,\eta )\leq 1\}$ is the unit ball. Note that $F_0^*(x,\eta)$ is homogeneous of degree 1 in $\eta$, it's clear that
$$\int_{B^*_xD} d\eta_1\cdots d\eta_n\geq \lambda^n(x)C(F_0)$$
where $C(F_0)$ is a constant depending on $F_0$, independent of $\lambda$.

It follows that
$$(n-1)C(F_0)\int_D \lambda^{n-1}(\lambda-\tilde\lambda)\,dx\leq (-1)^{n-1}\int_{\p D}\int_{\p D} B(\tau-\tilde\tau,\tau, G,\tau)\,dS_x dS_y,$$
which completes the proof.
\end{proof}

\begin{lemma}
Under the same assumptions of Lemma \ref{inequality 1},
    \begin{equation}\label{inequality 2}
       C \int_D (\lambda^{n-1}-\tilde{\lambda}^{n-1})(\lambda-\tilde{\lambda})\, dx \leq \frac{(-1)^{n-1}}{n-1} \int_{\p D} dS_x\int_{\p D} B(\tau-\tilde{\tau}, \tau, G, \tau) - B(\tau-\tilde{\tau}, \tilde{\tau}, G, \tilde{\tau}) dS_y
    \end{equation}
    where $C>0$ is a constant depending on $F_0$.
\end{lemma}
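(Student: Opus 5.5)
The plan is to apply Lemma \ref{inequality 1} twice, once as stated and once with the roles of $\lambda$ and $\tilde\lambda$ exchanged, and then subtract.

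\textbf{Step 1 (first application).} Lemma \ref{inequality 1} applied to the pair $(\lambda,\tilde\lambda)$ gives
$$C\int_D \lambda^{n-1}(\lambda-\tilde\lambda)\,dx\le \frac{(-1)^{n-1}}{n-1}\int_{\p D}dS_x\int_{\p D}B(\tau-\tilde\tau,\tau,G,\tau)\,dS_y .$$

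\textbf{Step 2 (swapped application).} Applying the same lemma to the pair $(\tilde\lambda,\lambda)$ gives
$$C\int_D \tilde\lambda^{n-1}(\tilde\lambda-\lambda)\,dx\le \frac{(-1)^{n-1}}{n-1}\int_{\p D}dS_x\int_{\p D}B(\tilde\tau-\tau,\tilde\tau,G,\tilde\tau)\,dS_y .$$
This is legitimate because the hypotheses of the lemma are symmetric in $\lambda,\tilde\lambda\in\Lambda(\lambda_0,\lambda_m,M)$.

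\textbf{Step 3 (linearity and summation).} From \eqref{multilinear form}, $B$ is linear in its first argument, since $f$ enters only through the first column of the determinant. Hence
$$B(\tilde\tau-\tau,\tilde\tau,G,\tilde\tau)=-B(\tau-\tilde\tau,\tilde\tau,G,\tilde\tau).$$
Adding the two inequalities, the left-hand sides combine to
$$C\int_D(\lambda^{n-1}-\tilde\lambda^{n-1})(\lambda-\tilde\lambda)\,dx,$$
and the right-hand sides combine to exactly the right-hand side of \eqref{inequality 2}.

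\textbf{Main obstacle.} The one point to check is that the constant $C$ is the same in both applications. In the proof of Lemma \ref{inequality 1}, $C=C(F_0)$ comes from the lower bound on the volume of the dual unit ball $\{\eta : F_0^*(x,\eta)\le 1\}$, scaled by $\lambda^n$. This constant depends only on $F_0$, not on the conformal factor, so it is identical for both orderings. If the constants had differed, I would instead take the minimum of the two, at the cost of a less clean combination of the left-hand sides.

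Finally, the left-hand side of \eqref{inequality 2} is nonnegative, because $t\mapsto t^{n-1}$ is monotone increasing on $(0,\infty)$. This is presumably what is used later to bound $\|\lambda-\tilde\lambda\|_{L^2}^2$, via $(\lambda^{n-1}-\tilde\lambda^{n-1})(\lambda-\tilde\lambda)\ge (n-1)\lambda_0^{n-2}(\lambda-\tilde\lambda)^2$.
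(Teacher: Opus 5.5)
Your proof is correct and matches the paper's one-line argument: you apply Lemma~\ref{inequality 1} to $(\lambda,\tilde\lambda)$ and to $(\tilde\lambda,\lambda)$, rewrite $B(\tilde\tau-\tau,\tilde\tau,G,\tilde\tau)=-B(\tau-\tilde\tau,\tilde\tau,G,\tilde\tau)$ by linearity of $B$ in its first argument, and add. One caution on your fallback remark: replacing two different constants by their minimum would not be legitimate at the level of Lemma~\ref{inequality 1}, because $\int_D\lambda^{n-1}(\lambda-\tilde\lambda)\,dx$ can be negative. Only after summing is the integrand $(\lambda^{n-1}-\tilde\lambda^{n-1})(\lambda-\tilde\lambda)$ pointwise nonnegative, and only then may a (possibly $x$-dependent) weight be bounded below by a constant.
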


\begin{proof}
    The inequality is obtained from Lemma \ref{inequality 1} by interchanging the roles of $\lambda$ and $\tilde{\lambda}$.
\end{proof}

\begin{proof}[Proof of Theorem \ref{main theorem}]
    It's easy to see that the LHS of the inequality \eqref{inequality 2} is bounded below by 
    $$(n-1)C\lambda_0^{n-2}  \int_D(\lambda-\tilde{\lambda})^2\, dx=(n-1)C\lambda_0^{n-2}\|\lambda-\tilde\lambda\|^2_{L^2(D)}.$$

    For the RHS of \eqref{inequality 2}, note the following equality:
    $$B(\tau-\tilde{\tau}, \tau, G, \tau) -B(\tau-\tilde{\tau}, \tilde{\tau}, G, \tilde{\tau}) = \sum_{i=1}^n M_i $$
where 
$$M_i =   \begin{vmatrix}
      0 & 0 & \tilde{\tau}_{x_1} & ... & \tilde{\tau}_{x_{i-1}} & (\tau-\tilde\tau)_{x_i} & \tau_{x_{i+1}} & ... & \tau_{x_n} \\
      0 & 0 & G_{x_1} & ... & G_{x_{i-1}} & 0 & G_{x_{i+1}} & ... & G_{x_n} \\
      (\tau-\tilde\tau)_{y_1} & G_{y_1} & \tilde{\tau}_{x_1 y_1} & ... & \tilde{\tau}_{x_{i-1} y_1} & (\tau-\tilde\tau)_{x_i y_1} & \tau_{x_{i+1} y_1} & ... & \tau_{x_n y_1} \\
      \vdots & \vdots & \vdots & \vdots & \vdots & \vdots & \vdots & \vdots & \vdots\\
      (\tau-\tilde\tau)_{y_n} & G_{y_n} & \tilde{\tau}_{x_1 y_n} & ... & \tilde{\tau}_{x_{i-1} y_n} & (\tau-\tilde\tau)_{x_i y_n} & \tau_{x_{i+1} y_n} & ... & \tau_{x_n y_n} 
      \end{vmatrix} $$
which can be shown by induction. 

Then one can check that 
\begin{align}\label{M_i}
     M_i=\sum_{i\neq j, j\neq k} m_{j,k}\, \begin{vmatrix}
      (\tau-\tilde\tau)_{y_1} & G_{y_1} & \cdots & \alpha_{x_{p-1} y_1} & \alpha_{x_{p+1} y_1} & \cdots & \alpha_{x_{q-1} y_1} & \alpha_{x_{q+1} y_1} & \cdots \\
      \vdots & \vdots & \vdots & \vdots & \vdots & \vdots & \vdots & \vdots & \vdots \\
      (\tau-\tilde\tau)_{y_n} & G_{y_n} & \cdots & \alpha_{x_{p-1} y_n} & \alpha_{x_{p+1} y_n} & \cdots & \alpha_{x_{q-1} y_n} & \alpha_{x_{q+1} y_n} & \cdots 
      \end{vmatrix}
\end{align}
where $p=j$, $q=k$ if $j<k$, and $p=k$, $q=j$ if $j>k$. Here $\alpha_{x_\ell y_r}=\tilde\tau_{x_\ell y_r}$ if $\ell<i$, $\alpha_{x_\ell y_r}=(\tau-\tilde\tau)_{x_\ell y_r}$ if $\ell=i$, and $\alpha_{x_\ell y_r}=\tau_{x_\ell y_r}$ if $\ell>i$. $|m_{j,k}|=|G_{x_j}\cdot \alpha_{x_k}|$.

Since $G$ is the boundary defining function, the second column of the above matrix $\nabla_y G$ is normal to the boundary $\p D$, therefore the determinant in \eqref{M_i} only depends on the tangential part of the first column, i.e. we can replace the first column, which is $\nabla_y (\tau-\tilde\tau)$, by the tangential gradient $\nabla^T_y(\tau-\tilde\tau)$.

By Lemma \ref{estimate1}, the $\tau_{x_i}$, $\tilde{\tau}_{x_i}$ terms are bounded by a uniform constant, $\tau_{x_i y_j}$, $\tilde\tau_{x_i y_j}$ are bounded by $\frac{C}{|x-y|}$ where $C$ is a constant that depends on $\lambda_0, \lambda_m , M$, $F_0$ and the diameter of $D$. $G_{x_i}$ and $G_{y_j}$ are clearly bounded on $\p D$. Therefore, we can rewrite $M_i$ as
$$M_i=\sum_{i\neq j, j\neq k} m_{j,k} \langle \nabla^T_y(\tau-\tilde\tau), \beta_{j,k}\rangle,$$
where $\beta_{j,k}$ is a vector whose entries are bounded by $\frac{C}{|x-y|^{n-2}}$ for some constant $C$. It follows that
$$|M_i|\leq C \frac{|\nabla^T_y (\tau-\tilde\tau)|}{|x-y|^{n-2}},\quad i=1,\cdots, n$$
for some constant $C$ that depends on $\lambda_0, \lambda_m, M$, $F_0$ and the diameter of $D$.
Thus
\begin{align*}
    |B(\tau-\tilde{\tau}, \tau, G, \tau) -B(\tau-\tilde{\tau}, \tilde{\tau}, G, \tilde{\tau})| \leq nC \frac{|\nabla^T_y (\tau-\tilde\tau)|}{|x-y|^{n-2}}
\end{align*}
It follows that the RHS of \eqref{inequality 2} is bounded above by 
$$C \|\nabla^T(\tau-\tilde\tau)\|_{\mathcal L^1(\p D\times\p D)}$$
for some constant $C>0$.

This proves Theorem \ref{main theorem}.
\end{proof}


\begin{remark}
    As observed in the proof of Theorem \ref{main theorem}, the weighted $\mathcal L^1$-norm is derived mainly to control the determinants $M_i$, $i=1,\cdots n$. In dimension $2$, Theorem \ref{main theorem} implies that it's enough to use the standard $L^1$-norm, namely
    $$\|\lambda-\tilde\lambda\|^2_{L^2(D)}\leq C\|\nabla^T(\tau-\tilde\tau)\|_{L^1(\p D\times\p D)}.$$
    We can also calculate $M_i$ explicitly in this case. Note that when dimension $n=2$, 
\begin{align*}
    M_1 & = \begin{vmatrix}
    0 & 0 & (\tau-\tilde\tau)_{x_1} & \tau_{x_2} \\
    0 & 0 & 0 & G_{x_2} \\
    (\tau-\tilde\tau)_{y_1} & G_{y_1} & (\tau-\tilde\tau)_{x_1 y_1} & \tau_{x_2 y_1} \\
    (\tau-\tilde\tau)_{y_2} & G_{y_2} & (\tau-\tilde\tau)_{x_1 y_2} & \tau_{x_2 y_2}
\end{vmatrix}\\
  & = (\tau-\tilde\tau)_{x_1}G_{x_2}\bigg( (\tau-\tilde\tau)_{y_1}G_{y_2}-(\tau-\tilde\tau)_{y_2}G_{y_1}\bigg).
\end{align*}
Similarly, we get
$$M_2=-(\tau-\tilde\tau)_{x_2}G_{x_1}\bigg( (\tau-\tilde\tau)_{y_1}G_{y_2}-(\tau-\tilde\tau)_{y_2}G_{y_1}\bigg).$$ 
Therefore
$$M_1+M_2=\bigg((\tau-\tilde\tau)_{x_1}G_{x_2}-(\tau-\tilde\tau)_{x_2}G_{x_1} \bigg)\bigg( (\tau-\tilde\tau)_{y_1}G_{y_2}-(\tau-\tilde\tau)_{y_2}G_{y_1}\bigg).$$
Note that $\nu=\nabla G=(G_{x_1}, G_{x_2})$, thus
$$(\tau-\tilde\tau)_{x_1}G_{x_2}-(\tau-\tilde\tau)_{x_2}G_{x_1}=\nabla_x(\tau-\tilde\tau)\cdot \nu^\perp=\nabla^T_x(\tau-\tilde\tau)\cdot \nu^\perp.$$
It follows that
\begin{align*}
    \|\lambda-\tilde\lambda\|^2_{L^2(D)} & \leq C \int_{\p D}\int_{\p D} |\nabla^T_x(\tau-\tilde\tau)|\cdot |\nabla_y^T(\tau-\tilde\tau)|\, dS_x dS_y\\
    & \leq \frac{C}{2}\int_{\p D\times \p D} |\nabla^T_x(\tau-\tilde\tau)|^2+|\nabla_y^T(\tau-\tilde\tau)|^2\, dS_xdS_y\\
    & =C\|\nabla^T(\tau-\tilde\tau)\|^2_{L^2(\p D\times\p D)},
\end{align*}
so
$$\|\lambda-\tilde\lambda\|_{L^2(D)}\leq C \|\nabla^T(\tau-\tilde\tau)\|_{L^2(\p D\times \p D)}.$$
\end{remark}

\begin{proof}[Proof of Corollary \ref{fixed near boundary}]
    Given $\lambda, \tilde\lambda\in \Lambda_{\mathcal O,\rho}(\lambda_0,\lambda_m, M)$, so $\lambda=\tilde\lambda$ in $D\setminus \mathcal O$, which is a neighborhood of the boundary $\p D$. Then due to the convexity assumption of $\p D$, there exists a small constant $\delta>0$, so that $\tau(x,y)=\tilde\tau(x,y)$ for $x,y\in \p D$ if $|x-y|\leq \delta$ (so the unique geodesic connecting $x$ and $y$ stays entirely in $D\setminus \mathcal O$). 
    It follows that 
    \begin{align*}
        \|\lambda-\tilde\lambda\|^2_{L^2(D)} & \leq C \|\nabla^T (\tau-\tilde\tau)\|_{\mathcal L^1(\p D\times\p D)}\\
        & = C\int_{\{(x,y)\in \p D\times\p D\,:\, |x-y|>\delta\}} \frac{|\nabla^T_x(\tau-\tilde\tau)|}{|x-y|^{n-2}}\,dS_x dS_y\\
        & \leq \frac{C}{\delta^{n-2}}\|\nabla^T(\tau-\tilde\tau)\|_{L^1(\p D\times\p D)}\\
        & \leq C'\|\nabla^T(\tau-\tilde\tau)\|_{L^2(\p D\times\p D)}.
    \end{align*}
\end{proof}

\section{Proof of Lemma \ref{lemma2}}\label{proof of lemma}

This section is devoted to the proof of Lemma \ref{lemma2}.



We start with the LHS of \eqref{mainlemma2}.
To simplify the notation, we denote $B:=B(\mathcal Lw,w,\tau,\tau)$. By the product rule on derivatives of determinants, we obtain the following equation:
$$nB=-\sum_{i=1}^n(\sum_{i \neq j } A_{ij} - \frac{\partial}{\partial y_i}B_i +C_i +D_i )$$
where 
\begin{align*}
    A_{ij}= \begin{vmatrix}
     0 & 0 & w_{x_1} & \cdots  & w_{x_n} \\
     0 &  0 & \tau_{x_1} & \cdots & \tau_{x_n} \\
     (\mathcal L w)_{y_1} & G_{y_1}& \tau_{x_1 y_1} & \cdots & \tau_{x_n y_1} \\
    \vdots & \vdots & \vdots & \vdots & \vdots \\
    (\mathcal L w)_{y_{i-1}} &  G_{y_{i-1}}& \tau_{x_1 y_{i-1}} & \cdots & \tau_{x_n y_{i-1}} \\
    \mathcal L w  &  G &  \tau_{x_1 } & \cdots & \tau_{x_n } \\
    (\mathcal L w)_{y_{i+1}} &  G_{y_{i+1}}& \tau_{x_1 y_{i+1}} & \cdots & \tau_{x_n y_{i+1}} \\
    \vdots &  \vdots & \vdots & \vdots & \vdots \\    
    (\mathcal L w)_{y_{j-1}} &  G_{y_{j-1}}& \tau_{x_1 y_{j-1}} & \cdots & \tau_{x_n y_{j-1}} \\
    (\mathcal Lw)_{y_j y_i}  &  G_{y_j y_i} &  \tau_{x_1 y_j y_i} & \cdots & \tau_{x_n y_j y_i } \\
    (\mathcal Lw)_{y_{j+1}} &  G_{y_{j+1}}& \tau_{x_1 y_{j+1}} & \cdots & \tau_{x_n y_{j+1}} \\
     \vdots & \vdots & \vdots & \vdots & \vdots \\
    (\mathcal Lw)_{y_n} &  G_{y_{n}}& \tau_{x_1 y_{n}} & \cdots & \tau_{x_n y_{n}}
\end{vmatrix} ,\quad B_i= \begin{vmatrix}
     0 & 0 & w_{x_1} & \cdots & w_{x_n} \\
     0 &  0 & \tau_{x_1} & \cdots & \tau_{x_n} \\
     (\mathcal L w)_{y_1} & G_{y_1}& \tau_{x_1 y_1} & \cdots & \tau_{x_n y_1} \\
    \vdots & \vdots & \vdots & \vdots & \vdots \\
    (\mathcal Lw)_{y_{i-1}} &  G_{y_{i-1}}& \tau_{x_1 y_{i-1}} & \cdots & \tau_{x_n y_{i-1}} \\
    \mathcal Lw  &  G &  \tau_{x_1 } & \cdots & \tau_{x_n } \\
    (\mathcal Lw)_{y_{i+1}} &  G_{y_{i+1}}& \tau_{x_1 y_{i+1}} & \cdots & \tau_{x_n y_{i+1}} \\
    \vdots &  \vdots & \vdots & \vdots & \vdots \\
    (\mathcal Lw)_{y_n} &  G_{y_{n}}& \tau_{x_1 y_{n}} & \cdots & \tau_{x_n y_{n}} 
\end{vmatrix},
\end{align*}
    
\begin{align*}    C_i= \begin{vmatrix}
     0 & 0 & w_{x_1 y_i} & \cdots & w_{x_n y_i} \\
     0 &  0 & \tau_{x_1} & \cdots & \tau_{x_n} \\
     (\mathcal Lw)_{y_1} & G_{y_1}& \tau_{x_1 y_1} & \cdots & \tau_{x_n y_1} \\
    \vdots & \vdots & \vdots & \vdots & \vdots \\
    (\mathcal Lw)_{y_{i-1}} &  G_{y_{i-1}}& \tau_{x_1 y_{i-1}} & \cdots & \tau_{x_n y_{i-1}} \\
    \mathcal Lw  &  G &  \tau_{x_1 } & \cdots & \tau_{x_n } \\
    (\mathcal Lw)_{y_{i+1}} &  G_{y_{i+1}}& \tau_{x_1 y_{i+1}} & \cdots & \tau_{x_n y_{i+1}} \\
    \vdots &  \vdots & \vdots & \vdots & \vdots \\
    (\mathcal Lw)_{y_n} &  G_{y_{n}}& \tau_{x_1 y_{n}} & \cdots & \tau_{x_n y_{n}}
\end{vmatrix},\quad D_i= \begin{vmatrix}
     0 & 0 & w_{x_1} & \cdots & w_{x_n} \\
     0 &  0 & \tau_{x_1 y_i} & \cdots & \tau_{x_n y_i} \\
     (\mathcal Lw)_{y_1} & G_{y_1}& \tau_{x_1 y_1} & \cdots & \tau_{x_n y_1} \\
    \vdots & \vdots & \vdots & \vdots & \vdots \\
    (\mathcal Lw)_{y_{i-1}} &  G_{y_{i-1}}& \tau_{x_1 y_{i-1}} & \cdots & \tau_{x_n y_{i-1}} \\
    \mathcal Lw  &  G &  \tau_{x_1 } & \cdots & \tau_{x_n } \\
    (\mathcal Lw)_{y_{i+1}} &  G_{y_{i+1}}& \tau_{x_1 y_{i+1}} & \cdots & \tau_{x_n y_{i+1}} \\
    \vdots &  \vdots & \vdots &  \vdots & \vdots \\
    (\mathcal Lw)_{y_n} &  G_{y_{n}}& \tau_{x_1 y_{n}} & \cdots & \tau_{x_n y_{n}}
\end{vmatrix}.
\end{align*}
Here $B_i$ is obtained from $B$ by replacing the $(i+2)$-th row with $(\mathcal{L}w, G, \tau_{x_1}, \cdots, \tau_{x_n})$ (i.e. omitting the derivative with respect to $y_i$). 
$C_i$, $D_i$ and $A_{ij}$ are obtained from $B_i$ by relocating the derivative in $y_i$ of the elements from the $(i+2)$-th row to the first, second and $(j+2)$-th row respectively.

It follows that
\begin{equation}\label{main2}
    2(-1)^n \int_{\p D}  B \,dS_y = (-1)^{n-1} \int_{\p D}  [ (n-2) B + \sum_{i=1}^n(\sum_{i \neq j } A_{ij} - \frac{\partial}{\partial y_i}B_i +C_i +D_i ) ]\, dS_y
\end{equation}
Now following identically the arguments from \cite{Muhometov}, which are purely algebraic, we have that
\begin{align*}
     \sum_{i=1}^n \sum_{i \neq j} A_{ij} =0; \quad (A_{ij}=-A_{ji})\\
      \int_{\p D} (\sum_{i=1}^n \frac{\partial}{\partial y_i} B_i)\, dS_y = \int_{\p D} B\, dS_y.
\end{align*}
The second equality is essentially an application of the divergence theorem.

Let $(\mathcal{L}w)_{y_k} = H_k + I_k$ where $H_k = w_{x_i y_k} v^i$ and $I_k = w_{x_i} v^i_{y_k}$, then we can decompose $B$ into $H+I$ 
where 
\begin{align*}
    H= \begin{vmatrix}
     0 & 0 & w_{x_1} & \cdots & w_{x_n} \\
     0 &  0 & \tau_{x_1} & \cdots & \tau_{x_n} \\
     H_1 & G_{y_1}& \tau_{x_1 y_1} & \cdots & \tau_{x_n y_1} \\
    \vdots & \vdots & \vdots & \vdots & \vdots \\
    H_n &  G_{y_{n}}& \tau_{x_1 y_{n}} & \cdots & \tau_{x_n y_{n}} 
\end{vmatrix}, \quad I=\begin{vmatrix}
     0 & 0 & w_{x_1} & \cdots & w_{x_n} \\
     0 &  0 & \tau_{x_1} & \cdots & \tau_{x_n} \\
     I_1 & G_{y_1}& \tau_{x_1 y_1} & \cdots & \tau_{x_n y_1} \\
    \vdots & \vdots & \vdots & \vdots & \vdots \\
    I_n &  G_{y_{n}}& \tau_{x_1 y_{n}} & \cdots & \tau_{x_n y_{n}} 
\end{vmatrix}.
\end{align*}


We first analyze the term $I$. We multiply each $i+2$-th column of $I$, $i=1,\cdots, n$, by $-\lambda^{-2} a^{ij}(x,\nabla_x \tau(x,y))w_{x_j}$ and add them to the first column, which does not change the determinant, thus
\begin{align*}
    I=\begin{vmatrix}
     -\lambda^{-2} a^{ij}(x,\nabla_x \tau(x,y))w_{x_j}w_{x_i} & 0 & w_{x_1} & \cdots & w_{x_n} \\
     -\lambda^{-2} a^{ij}(x,\nabla_x \tau(x,y))w_{x_j}\tau_{x_i} &  0 & \tau_{x_1} & \cdots & \tau_{x_n} \\
     w_{x_i}v^i_{y_1}-\lambda^{-2} a^{ij}(x,\nabla_x \tau(x,y))w_{x_j}\tau_{x_i y_1} & G_{y_1}& \tau_{x_1 y_1} & \cdots & \tau_{x_n y_1} \\
    \vdots & \vdots & \vdots & \vdots & \vdots \\
    w_{x_i}v^i_{y_n}-\lambda^{-2} a^{ij}(x,\nabla_x \tau(x,y))w_{x_j}\tau_{x_i y_n} &  G_{y_{n}}& \tau_{x_1 y_{n}} & \cdots & \tau_{x_n y_{n}} 
\end{vmatrix}.
\end{align*}
By \eqref{eq3}
$$ -\lambda^{-2} a^{ij}(x,\nabla_x \tau(x,y))w_{x_j}\tau_{x_i}=-w_{x_j}v^j=-\mathcal L w,$$
\begin{align*}
    w_{x_i}v^i_{y_k}-\lambda^{-2} a^{ij}(x,\nabla_x \tau(x,y))w_{x_j}\tau_{x_i y_k} & =\lambda^{-2} \big[\p_{y_k}a^{ij}(x,\nabla_x \tau(x,y))\big]w_{x_i}\tau_{x_j} \\
    & =\lambda^{-2} \p_v a^{ij}(x,\nabla_x \tau(x,y))[\p_{y_k}\nabla_x\tau(x,y)]w_{x_i}\tau_{x_j} = 0.
\end{align*}
Note that $a^{ij}(x,v)$ is homogeneous of degree $0$ in $v$, therefore $\p_k a^{ij}=\p_j a^{ik}$  since $a^{ij}$ is the inverse matrix composed with the Legendre transformation which is the same as the $i,j$ derivative of the fundamental tensor of the co-Finsler norm, see Aalto's lecture notes on Finsler geometry Proposition 2.4 , equation (12).
We denote
\begin{align*}
    B_{11}=\begin{vmatrix}
      0 & \tau_{x_1} & \cdots & \tau_{x_n} \\
     G_{y_1}& \tau_{x_1 y_1} & \cdots & \tau_{x_n y_1} \\
    \vdots & \vdots & \vdots & \vdots \\
   G_{y_{n}}& \tau_{x_1 y_{n}} & \cdots & \tau_{x_n y_{n}} 
    \end{vmatrix},\quad B_{21}=\begin{vmatrix}
          0 & w_{x_1} & \cdots & w_{x_n} \\
     G_{y_1}& \tau_{x_1 y_1} & \cdots & \tau_{x_n y_1} \\
    \vdots & \vdots & \vdots & \vdots \\
    G_{y_{n}}& \tau_{x_1 y_{n}} & \cdots & \tau_{x_n y_{n}} 
    \end{vmatrix}.
\end{align*}
It follows that
\begin{equation}\label{eq39}
    I = \mathcal{L}wB_{21}(\mathcal{L}w,w,\tau,\tau) - B_{11}(\mathcal{L}w,w,\tau,\tau) \lambda^{-2} a^{ij}(x,\nabla_x \tau(x,y))w_{y_i}w_{y_j}.
\end{equation}

\begin{remark} Due to the Euler's theorem, since the fundamental tensor and its inverse/dual are homogeneous 1 functions, derivatives on the $v$ variable will eventually be canceled. This is the main reason that the argument for the Riemannian case can be carried over to the Finsler case. This will be the crucial argument for all computations when the derivatives of the $v$ variable arises.
\end{remark}

Then again by Mukhometov, 
\begin{align*}
    \sum_{i=1}^n D_i=(n-1)\mathcal L w B_{21}-(n-2)B.
\end{align*}

Based on the above arguments

\begin{align*}
       2(-1)^n \int_{\p D}  B dS_y = (-1)^{n-1} \int_{\p D}  [ B_{11} \lambda^{-2}a^{ij}(x,\nabla_x \tau(x,y))w_{x_i}w_{x_j}+(n-2)\mathcal L w B_{21}-H+\sum_{i=1}^n C_i ] \,dS_y
\end{align*}

Now the proof will be completed by applying the following three lemmas.

\begin{lemma}
    \begin{align*}
        \int_{\p D}  [(n-2)\mathcal L w B_{21}+ B_{11} \lambda^{-2}a^{ij}(x,\nabla_x \tau(x,y))w_{x_i}w_{x_j}] \,dS_y\\
        =\int_{\partial \Omega}(\eta_l \tilde{v}^l)[(n-2) (\mathcal{L}w)^2+\lambda(x)^{-2} a^{ij}(x, \eta) w_{x_i}w_{x_j}]_{y= y(x,\eta)} dS_\eta 
    \end{align*}
\end{lemma}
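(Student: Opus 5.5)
The identity is a change of variables from $y\in\p D$ to $\eta\in\p\Omega$. For fixed $x\in D^o$, the map $y\mapsto \eta(y)=\nabla_x\tau(x,y)$ (equivalently $\nabla_x\tau(y,x)$, by reversibility) sends $\p D$ into the co-sphere $\p\Omega=S^*_xD$, by \eqref{ellipsoid}. By simplicity it is a diffeomorphism: $y$ is recovered from the initial covector $\eta$ through $L^{-1}$ followed by the exit point of the geodesic, i.e. $y=y(x,\eta)$. The integrand factors $(n-2)(\mathcal Lw)^2+\lambda^{-2}a^{ij}w_{x_i}w_{x_j}$ are pointwise functions of $y$, hence of $\eta$. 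So the lemma reduces to two Jacobian identities, holding with the same orientation/sign convention:
\begin{equation*}
B_{11}\,dS_y=(\eta_l\tilde v^l)\,dS_\eta,\qquad \mathcal Lw\,B_{21}\,dS_y=(\mathcal Lw)^2(\eta_l\tilde v^l)\,dS_\eta .
\end{equation*}
The second follows from the first once we show $B_{21}=\mathcal Lw\,B_{11}$ on $\p D$, up to terms that vanish.

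\textbf{Step 1: $B_{11}$ is the Jacobian of the Gauss-type map.} The bordered determinant $B_{11}$ has entries $G_{y_k}$, $\eta_i=\tau_{x_i}$ and $\tau_{x_iy_k}=\p_{y_k}\eta_i$. Up to the factor $|\nabla G|=1$, it equals $\pm$ the $(n-1)$-form $\sum_i\eta_i\,d\eta_1\wedge\cdots\widehat{d\eta_i}\cdots\wedge d\eta_n$ pulled back to $\p D$ and evaluated against $dS_y$. This is the classical fact that a bordered Hessian determinant computes the normal component of the pulled-back area form. On $\p\Omega$ this form is $(\eta\cdot N)\,dS_\eta$, where $N$ is the Euclidean unit normal to $\p\Omega$. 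The normal to $\{\lambda^{-2}a^{ij}(x,\eta)\eta_i\eta_j=1\}$ is proportional to $\p_\eta\big(\tfrac12\lambda^{-2}F_0^{*2}\big)=\lambda^{-2}a^{ij}\eta_j=v^i$ by \eqref{eq3}. Here Euler's theorem kills the $\p_\eta a^{ij}$ terms, which is the point of the Remark above. Hence $N=\tilde v$, and $\eta\cdot N=\eta_l\tilde v^l>0$ by \eqref{eq2}. This gives $B_{11}\,dS_y=(\eta_l\tilde v^l)\,dS_\eta$, with the sign fixed by orientation, consistent with the sign conventions in \eqref{main2}.

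\textbf{Step 2: reduce $B_{21}$ to $B_{11}$.} The rows $(\tau_{x_1y_k},\dots,\tau_{x_ny_k})$ span the tangent space to $\p\Omega$ at $\eta$ as $y$ moves along $\p D$. Since $v$ is normal to $\p\Omega$, we have $v^i\tau_{x_iy_k}=0$ for tangential directions; this also follows by differentiating \eqref{eq2} in $y$ and using homogeneity. Decompose $\nabla_xw=(\mathcal Lw)\,\eta+T$, where $T$ satisfies $T\cdot v=0$, using $\eta\cdot v=1$. The vector $T$ lies in the tangent space of $\p\Omega$, so it is a combination of the $y$-tangential derivatives of $\eta$. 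Its contribution to the bordered determinant vanishes, because that row is a linear combination of the other rows, modulo the $G_y$ border, which handles the normal direction. Therefore $B_{21}=\mathcal Lw\,B_{11}$, and multiplying by $\mathcal Lw$ and applying Step 1 yields the $(n-2)$ term. The $B_{11}\lambda^{-2}a^{ij}w_{x_i}w_{x_j}$ term is then immediate from Step 1. Note that $a^{ij}(x,\nabla_x\tau)=a^{ij}(x,\eta)$.

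\textbf{Main obstacle.} Step 1 is the hardest part: making the bordered-determinant/Jacobian identity precise, including the sign and the $(-1)^n$ conventions. I would verify it by choosing local coordinates $y=y(s)$ on $\p D$ with $\nabla G$ as the last frame vector. Expanding $B_{11}$ along the $G$ column reduces it to $\det(\eta,\p_{s_1}\eta,\dots,\p_{s_{n-1}}\eta)$ times the surface element. This is exactly $(\eta\cdot N)$ times the area of the image parallelepiped. Injectivity of $y\mapsto\eta$ (from simplicity), together with the integrability of the singularity near $y=x$ established in Lemma \ref{estimate1}, justifies the global change of variables.
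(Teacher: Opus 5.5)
Your proposal is correct and is essentially the paper's argument. Both change variables $y\mapsto\eta=\nabla_x\tau$, read the bordered determinant as the normal component of the image area element, and identify the unit normal to $\p\Omega$ with $\tilde v$ via Euler's theorem. Your row-reduction proof of $B_{21}=\mathcal Lw\,B_{11}$ is a streamlined version of the paper's Cauchy--Binet identity $(-1)^{n-1}B_{21}|N|=\langle\tilde N,\nabla_x w\rangle$, combined with $\tilde N\parallel\tilde v$ and $\eta_l v^l=1$.

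On the sign you deferred: expanding along the $G$ column, together with the fact that $y\mapsto\eta$ has degree $(-1)^n$, gives $(-1)^{n-1}B_{11}\,dS_y=(\eta_l\tilde v^l)\,dS_\eta$. This is exactly the factor $(-1)^{n-1}$ carried by the paper's $J_4$ and $J_5$, and it is missing from the displayed statement.
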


The lemma is due to a change of variables
$$T:y\in S\to \eta=d\tau \in \p\Omega$$
For each fixed $x$, 
$$T':y\in S\to v(y,x)\in S_xD$$
is a diffeomorphism, thus $T=L\circ T'$ is a diffeomorphism, where $L$ is the Legendre transform at $x$.
Again the proof is algebraic, we provide a skecth of the proof below, see Mukhometov for the detailed argument. 

To simplify notations, we define the following:

\begin{equation}\label{J4}
    J_4 = (-1)^{n-1} \int_S B_{11} (\mathcal Lw,w,\tau,\tau) \lambda^{-2} a^{ij}(x,\nabla_x \tau(x,y))w_{x_i}w_{x_j} dS_y
\end{equation}

and 

\begin{equation}\label{J5}
    J_5 = (-1)^{n-1} \int_S (n-2) \mathcal Lw B_{21} (\mathcal Lw,w,\tau,\tau) dS_y
\end{equation}

\begin{proof}[Sketch of the proof]
For a fixed $x$, consider the map $\tilde{y}(x, \tilde{\tau}, \nabla_x\tau(\theta)) : D \times [0, \infty)] \times S^{n-1} \rightarrow \mathbf{R}^n $ obtained from the composition of the following maps:

\begin{equation}\label{transform1}
\tilde{y} (x,y) = \exp_x(y) \end{equation}
\begin{equation}\label{transform2}
y = \tilde{\tau} v^0  
\end{equation}
\begin{equation}\label{transform3}
v^0 =(v_1^0, ..... v_{n}^0) \end{equation}
\begin{equation}\label{transform4}
v^0_j = -\lambda(x)^{-2} a^{ij}(x,\nabla_x\tau) \tau_{x_i}
\end{equation}

and
\begin{equation}\label{transform5}
\nabla_x \tau (\theta): S^{n-1} \rightarrow S_x^*D \subset \mathbf{R}^n, \ \  \theta = (\theta_1,.....\theta_{n-1}) \in S^{n-1} \subset \mathbf{R}^n \cong T_x D
\end{equation}

where $\theta$ is the spherical coordinate of the cotangent sphere bundle $S_x^*D =: \{ \nabla_x \tau : \lambda^{-2}a^{ij}(x,\nabla \tau)\tau_{x_i}\tau_{x_j} = \lambda^{-1}\mathcal{F}^*(x,\nabla \tau)=1 \}$ (where $\mathcal{F}^*$ denote the co-Finsler metric with respect to $\mathcal{F}$) at each $x \in D$. The spherical coordinates clearly gives a smooth parametrization of $S_x^*M$ (Where the diffeomorphism is given by division by $ \lambda \mathcal{F}$ length and inverse is given by division by Euclidean length).

For a fixed $x \in D$, We denote the relation $\tilde{y}(x, \tau_0(x,\nabla_x \tau (\theta)), \nabla_x\tau(\theta))$ with:

\begin{equation}\label{xi_to_theta}
    y (x, \nabla_x \tau (\theta)) : S^{n-1} \rightarrow \mathbf{R}^n
\end{equation}

We first transform the integral $J_5$ using the change of coordinates above:

\begin{equation} \label{eq54}
J_5 = \int_{S^{n-1}} (n-2) \mathcal Lw B_{21}(\mathcal Lw,w,\tau,\tau) |N|_{y = y(x,\nabla_x \tau (\theta)} d\theta
\end{equation}

where $d\theta$ is the Euclidean measure on $S^{n-1}$ with the coordinates $\theta = \theta_1, .... \theta_{n-1}$. $N = [\frac{\partial y}{ \partial \theta_1},.....,\frac{\partial y}{ \partial \theta_{n-1}}]$ where $[....]$ denotes the vector product of vectors. One can check easily that $|N|$ is the Jacobian of the transformation $y (x, \nabla_x \tau (\theta))$.

One can show that $N$ is outward normal, which will give

\begin{equation}\label{NsameasF}
    |N|\nabla F |_{y \in S} = (-1)^n N
\end{equation}

Recall that $F$ is the boundary defining function.

We can now transform the integrals $J_4$ \eqref{J4} and $J_5$ \eqref{J5}. Expressing the vector $N$ in coordinate gives $$N_i =  (-1)^{i-1} | \frac{D y_1, .... y_{i-1}, y_{i+1},.y_n}{D\theta_1,....,\theta_{n-1}}|$$ which is $(-1)^i$ times the determinant of the submatrix obtained from removing the first row and $\i$th column of

$$  \begin{Vmatrix}
     \frac{\partial}{\partial x_1}  & . & .  & . & \frac{\partial}{\partial x_n}\\
     \frac{\partial y_1}{\partial \theta_1} & \cdots & \frac{\partial y_n}{\partial \theta_{1}}\\
    . & . & . & . & .  \\
    \frac{\partial y_1}{\partial \theta_{n-1}} & \cdots & \frac{\partial y_n}{\partial \theta_{n-1}}\\

\end{Vmatrix}  $$.

By eq \eqref{NsameasF}, we can replace the $F_{y_i}$ in the first column of $B_{21}$ ($B$ with first column and second row struck out) with $N_i$. By expanding this version of $B_{21}$ along the first row we claim the following lemma:


By the next lemma we may write 

\begin{equation}\label{B21}
    B_{21}(\mathcal Lw,w,\tau,\tau) |N| = \langle \frac{\tilde{N}}{|\tilde{N}|}, \nabla_x w \rangle |\tilde{N}|
    \end{equation}
where $\tilde{N} = [\frac{\partial \nabla_x \tau }{\partial \theta_1},....,  \frac{\partial \nabla_x \tau }{\partial \theta_{n-1}}  ] $ and $|\tilde{N}| = \sum_{i=1}^n (\tilde{N}_i)$.

$\frac{\tilde{N}}{|\tilde{N}|}$ is an outward unit normal vector field to the ellipsoid given by eq \eqref{ellipsoid}. We show that $\tilde{v} = \frac{v}{\sum_{i=1}^n v_i}$ is also the same vector field. The ellipsoid \eqref{ellipsoid} can be written in terms of $\nabla_x \tau$, $a^{ij}(x,v)$ is the inverse matrix of the fundamental tensor $a_{ij}(x,v)$ which is equal to the fundamental tensor of the co-Finsler norm $\mathcal{F}^*$ and we may express $a^{ij}(x,v)$ as $a^{ij}(x,\nabla_x \tau)$ (Obtained by composing $a^{ij}$ with the inverse Legendre transformation with respect to the metric $\lambda \mathcal{F}$ to express $v$ in terms of $\nabla_x\tau$ ), so the ellipsoid \eqref{ellipsoid} is the level set of the function $R(x,u):=\lambda^{-2}(x)a^{ij}(x, u) u_{x_i}u_{x_j} :\mathbf{R}^n \rightarrow \mathbf{R}^n$ which is smooth away from the origin. Differentiating $R$ in $u$ together with Euler's theorem we see that $R_{u_i}(x, \nabla_x \tau(\theta)) = v^i(x,y(\theta))$ as in eq \eqref{eq3}. Hence we have

 \begin{equation}
     \frac{\tilde{N}}{|\tilde{N}|} = v [\sum_{i=1}^n (v^i)^2]^{\frac{1}{2}} = v \tau_{x_j} \tilde{v^j} 
\end{equation} 

where $\tilde{v} = v [\sum_{i=1}^n (v^i)^2]^{\frac{1}{2}}$. Where the last equality is obtained using equation \eqref{eq2}.
Substituting the equation above into \eqref{B21}, and substituting into eq \eqref{eq54}, we obtain

$$J_5 = \int_{S^{n-1}} (n-2)( \tau_i \tilde{v^i} (\mathcal Lw)^2 |\tilde{N}| |_{y(x, \tau)} d\theta = \int_{\partial \Omega} (n-2) (\eta_i \tilde{v^i}) (\mathcal Lw)^2 |_{y(x,\eta)} d\eta$$

Where $\eta = \tau(\theta)$, $\eta = (\eta_1, ... \eta_n)$ are the coordinates of the ellipsoid $\partial \Omega$ in $\mathbf{R}^n$ and $d\eta(\theta)$ is the surface measure. Here we used the fact that $|\tilde{N}|$ is the Jacobian of the transformation of $\eta = \tau(\theta)$.

We may transform the integral $J_4$ analogously by replacing $B_{11}(\mathcal Lw,w,\tau,\tau) |N|$ with $ \langle \tilde{N} , \nabla_x \tau \rangle $.
, which gives us 

$$J_4 = \int_{\partial \Omega} \eta_k \tilde{v^k} \lambda(x)^{-2} a^{ij}(x,\eta) |_{y(x,\eta)}  dS_\eta$$

In summary, we transformed the integrals $J_4$ and $J_5$ from $y \in S$ to $\theta \in S^{n-1}$ using the transformation \eqref{xi_to_theta}, then from $\theta$ to $\nabla_x\tau = \eta \in S_x^* D$. Adding $J_4$ and $J_5$ together gives the first integral on the RHS of eq \eqref{mainlemma2}.
\end{proof}

The next lemma is purely algebraic:

\begin{lemma}
    $$(-1)^{n-1}B_{21}(\mathcal L w,w,\tau,\tau)|N|=\langle \tilde{N}, \nabla_x w \rangle$$
    where $\tilde{N} = [\frac{\partial \nabla_x \tau }{\partial \theta_1},....,  \frac{\partial \nabla_x \tau }{\partial \theta_{n-1}}  ] $
\end{lemma}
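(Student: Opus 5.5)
The identity is a chain-rule computation for the composite map $\theta\mapsto \eta=\nabla_x\tau(x,y)\mapsto y$. I will expand both sides as a sum over $i$ of $w_{x_i}$ times an $(n-1)\times(n-1)$ determinant and match the coefficients.

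\textbf{Step 1: rewrite the left side.} By \eqref{NsameasF}, the boundary gradient $\nabla_y G$ at $y\in\partial D$ is parallel to $N$, with $|N|\nabla_y G=(-1)^nN$. Since $B_{21}$ is linear in its first column $(G_{y_1},\dots,G_{y_n})^T$, I can replace that column by $N$ at the cost of a factor $(-1)^n$. Expanding along the first row $(0,w_{x_1},\dots,w_{x_n})$ gives
\begin{equation*}
B_{21}|N|=(-1)^n\sum_{i}(-1)^{i}w_{x_i}\,\det\big[N\,\big|\,\tau_{x_k y_l}\big]_{(i)},
\end{equation*}
where the subscript $(i)$ means the column $(\tau_{x_iy_l})_l$ has been deleted.

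\textbf{Step 2: the key identity.} For $\tilde N=[\partial_{\theta_1}\nabla_x\tau,\dots,\partial_{\theta_{n-1}}\nabla_x\tau]$, the chain rule gives
\begin{equation*}
\partial_{\theta_a}\tau_{x_k}=\sum_l \tau_{x_ky_l}\,\partial_{\theta_a}y_l,
\end{equation*}
that is, $\partial_\theta\eta=T^{T}\partial_\theta y$ with $T=(\tau_{x_ky_l})$. Both $N$ and $\tilde N$ are cross products of $n-1$ vectors. By the Cauchy--Binet formula for generalized cross products of linearly transformed vectors, $[Tu_1,\dots,Tu_{n-1}]=\operatorname{cof}(T)[u_1,\dots,u_{n-1}]$. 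Hence
\begin{equation*}
\tilde N=\operatorname{cof}(T^T)\,N,\qquad \text{so}\qquad \tilde N_i=\sum_l (\operatorname{cof}T)_{li}N_l .
\end{equation*}
Pairing with $\nabla_x w$ gives $\langle\tilde N,\nabla_x w\rangle=\sum_{i,l}w_{x_i}(\operatorname{cof}T)_{li}N_l$.

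On the other side, expanding the bordered minor from Step 1 along its $N$-column gives $\det[N\,|\,T]_{(i)}=\sum_l(-1)^{l+1}N_l\,\det T_{(l),(i)}$, where $T_{(l),(i)}$ is $T$ with row $l$ and column $i$ removed. Since $(\operatorname{cof}T)_{li}=(-1)^{l+i}\det T_{(l),(i)}$, the two expansions agree term by term up to a global sign. Collecting the factors $(-1)^n$, $(-1)^i$ and $(-1)^{l+1}$ leaves the stated $(-1)^{n-1}$.

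\textbf{Step 3: justification.} I will check that the identity $\partial_\theta\eta=T^T\partial_\theta y$ is legitimate. This holds because $x$ is fixed and $\eta(\theta)=\nabla_x\tau(x,y(\theta))$, with $\tau$ being $C^2$ off the diagonal by Lemma \ref{estimate1}. The identity is only needed for $y\in\partial D\setminus\{x\}$, and for $x$ in the interior $y\neq x$ always.

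\textbf{Main obstacle.} The delicate part is the sign bookkeeping. The orientation convention for the cross product, with $N_i=(-1)^{i-1}\det(\ldots)$, and the sign in \eqref{NsameasF} must be tracked consistently. I would first verify the whole identity for $n=2$, where $N=(\partial_\theta y_2,-\partial_\theta y_1)$ up to sign and everything is explicit, to fix the conventions. I would then run the general cofactor argument.
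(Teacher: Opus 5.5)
Your proposal is correct and follows essentially the paper's route: replace the $G_y$ column by $N$ via \eqref{NsameasF}, expand along the first row, and identify $\tilde N_i=\sum_l(-1)^{i+l}N_l\det Q_{li}$ using the chain rule $\partial_\theta\eta=T^{T}\partial_\theta y$ together with Cauchy--Binet. Your formulation of that last step as $[Au_1,\dots,Au_{n-1}]=\operatorname{cof}(A)[u_1,\dots,u_{n-1}]$ is the paper's Cauchy--Binet computation for $\det(P\,Q_i)$ written in matrix form, and your sign count (net factor $(-1)^{n+1}$) is right.
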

\begin{proof}
    \begin{align*}
       B_{21}(\mathcal L w,w,\tau,\tau)|N| &=(-1)^n\begin{vmatrix}
           0 & w_{x_1} & \cdots & w_{x_n} \\
           N_1 & \tau_{x_1 y_1} & \cdots & \tau_{x_n y_1} \\
           \vdots & \vdots & \ddots & \vdots \\
           N_n & \tau_{x_1 y_n} & \cdots & \tau_{x_n y_n} 
       \end{vmatrix} \\
       & =(-1)^n\sum_{i=1}^n w_{x_i} (-1)^i \sum_{j=1}^n (-1)^{j+1} N_j \cdot \det Q_{ji}
    \end{align*}
where $Q_{ji}$ is the matrix obtained by removing the $j$-th row and the $i$-th column of the matrix $\begin{pmatrix}
    \tau_{x_1 y_1} & \cdots & \tau_{x_n y_1} \\
           \vdots & \ddots & \vdots \\
          \tau_{x_1 y_n} & \cdots & \tau_{x_n y_n}
\end{pmatrix}$. We will show that $\sum_{j=1}^n (-1)^{i+j}N_j \cdot \det Q_{ji}=\tilde N_i$.

By definition
\begin{align*}
    \tilde N_i & =(-1)^{i+1}\begin{vmatrix}
        \frac{\partial \tau_{x_1}}{\partial \theta_1} & \cdots & \frac{\partial \tau_{x_{i-1}}}{\partial \theta_1} & \frac{\partial \tau_{x_{i+1}}}{\partial \theta_1} & \cdots & \frac{\partial \tau_{x_n}}{\partial \theta_1} \\
        \vdots & \ddots & \vdots & \vdots & \ddots & \vdots\\
        \frac{\partial \tau_{x_1}}{\partial \theta_{n-1}} & \cdots & \frac{\partial \tau_{x_{i-1}}}{\partial \theta_{n-1}} & \frac{\partial \tau_{x_{i+1}}}{\partial \theta_{n-1}} & \cdots & \frac{\partial \tau_{x_n}}{\partial \theta_{n-1}} 
    \end{vmatrix}
\end{align*}
    Note that by the chain rule
    \begin{align*}
        \begin{pmatrix}
        \frac{\partial \tau_{x_1}}{\partial \theta_1} & \cdots & \frac{\partial \tau_{x_{i-1}}}{\partial \theta_1} & \frac{\partial \tau_{x_{i+1}}}{\partial \theta_1} & \cdots & \frac{\partial \tau_{x_n}}{\partial \theta_1} \\
        \vdots & \ddots & \vdots & \vdots & \ddots & \vdots\\
        \frac{\partial \tau_{x_1}}{\partial \theta_{n-1}} & \cdots & \frac{\partial \tau_{x_{i-1}}}{\partial \theta_{n-1}} & \frac{\partial \tau_{x_{i+1}}}{\partial \theta_{n-1}} & \cdots & \frac{\partial \tau_{x_n}}{\partial \theta_{n-1}} 
    \end{pmatrix}=P\cdot Q_i
    \end{align*}
    where 
    $$P=\begin{pmatrix}
        \frac{\partial y_1}{\partial \theta_1} & \cdots & \frac{\partial y_n}{\partial \theta_1}\\
        \vdots & \ddots & \vdots\\
        \frac{\partial y_1}{\partial \theta_{n-1}} & \cdots & \frac{\partial y_n}{\partial \theta_{n-1}}\\
    \end{pmatrix}_{(n-1)\times n}$$
    and 
    $$Q_i=\begin{pmatrix}
        \frac{\partial \tau_{x_1}}{\partial y_1} & \cdots & \frac{\partial \tau_{x_{i-1}}}{\partial y_1} & \frac{\partial \tau_{x_{i+1}}}{\partial y_1} & \cdots & \frac{\partial \tau_{x_n}}{\partial y_1}\\
        \vdots & \ddots & \vdots & \vdots & \ddots & \vdots\\
        \frac{\partial \tau_{x_1}}{\partial y_n} & \cdots & \frac{\partial \tau_{x_{i-1}}}{\partial y_n} & \frac{\partial \tau_{x_{i+1}}}{\partial y_n} & \cdots & \frac{\partial \tau_{x_n}}{\partial y_n}
    \end{pmatrix}_{n\times (n-1)}$$
    According to the Cauchy-Binet formula
    \begin{align*}
        \det (P \cdot Q_i)=\sum_{j=1}^n \det P_j\cdot \det Q_{ji}
    \end{align*}
    where $P_j$ is the square matrix obtained by removing the $j$-th column of $P$ and $Q_{ji}$ is the square matrix obtained as before. Recall that $N_j=(-1)^{j+1}\det P_j$, therefore
    $$\tilde N_i=(-1)^{i+1}\sum_{j=1}^n (-1)^{j+1} N_j\cdot \det Q_{ji}=\sum_{j=1}^n (-1)^{i+j} N_j \cdot \det Q_{ji}.$$
    It follows that
    $$B_{21}(\mathcal L w,w,\tau,\tau)|N|=(-1)^n\sum_{i=1}^n w_{x_i} (-\tilde N_i)=(-1)^{n-1} \langle \tilde{N}, \nabla_x w \rangle.$$
\end{proof}

\begin{lemma}
    $$\int_D [\sum_{i}^n C_i-H]\, dx=-\int_S B(w,w,F,\tau)\,dS_x.$$
\end{lemma}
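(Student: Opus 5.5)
The plan is to show that the integrand $\sum_i C_i - H$ is an $x$-divergence. Then the divergence theorem in $x$ converts $\int_D$ into a boundary integral over $S=\p D$, and that boundary integral should be $-B(w,w,G,\tau)$, where $F$ in the statement denotes the boundary defining function $G$.

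\emph{Step 1: rewrite $H$.} In $H$ the first column is $H_k = w_{x_i y_k} v^i$. Using \eqref{eq3}, I would write $v^i=\lambda^{-2}a^{ij}(x,\nabla_x\tau)\tau_{x_j}$. Since $v^i$ is a combination of the columns $(\tau_{x_j y_k})_k$ with coefficients independent of $k$, I would then move $w_{x_i y_k}v^i$ into a sum of determinants in which the $x$-derivative columns are modified. Concretely, expanding $H$ along the first column and the first row (the row $w_{x_1},\dots,w_{x_n}$) gives a sum over pairs $(i,k)$ of $w_{x_i}\,\p_{y_k}(\cdot)$ times cofactors built from $\tau_{x_j}$, $G_{y}$ and $\tau_{x_jy}$. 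The row $C_i$ carries $w_{x_j y_i}$ in the first row, while the row $\mathcal Lw, G, \tau_{x_1},\dots,\tau_{x_n}$ sits in position $i+2$. So both $\sum_i C_i$ and $H$ are bilinear in $(\nabla_x w,\nabla_x\nabla_y w)$, contracted against minors of the Jacobian-type matrix $(\tau_{x_j},\tau_{x_jy_k})$.

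\emph{Step 2: exhibit the divergence structure.} The cofactors of $\tau_{x_j y_k}$ are null Lagrangians in $x$: for each fixed $y$, the sum over $j$ of $\p_{x_j}$ of the cofactor of the column $\tau_{x_j\,\cdot}$ vanishes. This is the Piola identity, the same mechanism that gave $\sum A_{ij}=0$ and $\int\sum\p_{y_i}B_i=\int B$, now applied in the $x$ variables. I would therefore look for a vector field $V^j(x,y)$, defined as the determinant $B(w,w,G,\tau)$ with the $j$-th $x$-column treated as the distinguished one (equivalently, a Laplace expansion of $B(w,w,G,\tau)$ along the $w_{x_j}$ entries), such that
\[ \sum_{i}C_i - H = -\sum_j \p_{x_j} V^j .\]
To verify this, I would differentiate $V^j$ in $x_j$. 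Terms in which $\p_{x_j}$ hits $\tau_{x_k y}$ or $\tau_{x_k}$ cancel pairwise by symmetry of mixed partials (Piola). The surviving terms contain $w_{x_jx_k}$ and $w_{x_j y_k}$. The $w_{x_jx_k}$ terms also cancel by antisymmetry, since $w_{x_jx_k}$ is symmetric while the cofactor pairing is alternating. What remains should be exactly the $w_{x_jy_k}$ pieces appearing in $C_i$ and $H$. I also need the fact that $G$ depends only on $y$. Where derivatives of $a^{ij}(x,\nabla_x\tau)$ in $v$ appear, I would kill them exactly as for $I$, using $0$-homogeneity, the symmetry $\p_k a^{ij}=\p_j a^{ik}$, and Euler's relation \eqref{eq2}.

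\emph{Step 3: boundary term and singularity.} Next I would integrate over $D\setminus B_\varepsilon(y)$ and apply the divergence theorem with outward normal $\nabla G(x)$ on $S$. On $S$ this produces $\sum_j G_{x_j}V^j = B(w,w,G,\tau)$, because inserting $G_{x_j}$ against the $j$-column expansion reconstitutes the second row $(G_{x_1},\dots,G_{x_n})$. I would then show that the small-sphere contribution vanishes as $\varepsilon\to0$. By Lemma \ref{estimate1} and the assumptions on $w$, $V^j=O(|x-y|^{-(n-2)})$, while the sphere has area $O(\varepsilon^{n-1})$, so the contribution is $O(\varepsilon)\to0$. For $y\in S$, $B_\varepsilon(y)\cap D$ is only a half ball, and the same bound applies.

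The main obstacle is Step 2, the precise bookkeeping showing that $\sum_iC_i-H$ matches the divergence, including signs and the placement of the row $(\mathcal Lw,G,\tau_x)$ in $C_i$. To handle it, I would first check the case $n=2$ by explicit expansion, as in the Remark, and then follow Mukhometov's algebra, inserting the homogeneity cancellations at each place where $v$ is differentiated.
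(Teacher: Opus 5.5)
Your proposal is correct and follows the paper's argument. The field you want is $V^i=$ the cofactor of the entry $G_{x_i}$ in the second row of $B(w,w,G,\tau)$, as your Step~3 requires; it is not the expansion along the $w_{x_j}$ entries, whose pairing with $\nabla G$ vanishes identically. This $V^i$ is exactly minus the paper's field $\sum_{j,k}(-1)^{i+j+k}\sigma_{ij}w_{x_j}w_{y_k}P_{kij}$, and your excision of a neighbourhood of $y$ with the $O(\varepsilon)$ bound is the paper's justification of the divergence theorem.

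The one point to sharpen is Step~2's expectation that the surviving terms of $-\mathrm{div}_x V$ are literally the pieces of $\sum_i C_i-H$. The latter contain $v$ and cofactors $M_{kj}$ of $B_{11}$ involving $\tau_x$. They match the minors $P_{kij}$ only through the paper's relation $M_{kj}v^i-M_{ki}v^j=(-1)^{k+i+j}P_{kij}$, which uses three facts:
\begin{enumerate}
\item $\tau_{x_l}v^l=1$;
\item the kernel identity $v^l\tau_{x_ly_k}=0$, so $v$ annihilates the columns $\tau_{x_jy}$ rather than being a combination of them as your Step~1 suggests; in the Finsler case this follows by differentiating \eqref{ellipsoid} in $y$ and killing $\partial_\eta a^{ij}$ with Euler's relation;
\item $G(y)=0$ for $y\in S$, which removes from $C_i$ the terms quadratic in $w_{xy}$.
\end{enumerate}
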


\begin{proof}
    By purely algebraic calculations, Mukhometov showes in \cite{Muhometov} that
$$\sum_{i=1}^n C_i-H=\sum_{i,j,k=1}^n (-1)^{i+j+k}[w_{x_j} w_{y_k}P_{kij}\sigma_{ij}]_{x_i}$$
where $P_{kij}$ is the minor obtained from $B_{11}(\mathcal Lw,w,\tau, \tau)$ by removing the first and $k+1$ row, and the $i+1$ and $j+1$ column, and
$$\sigma_{ij} =  \begin{cases}
     1, \ \ i < j \\
    -1, \ \ i>j 
\end{cases}$$
(Here we also need to use the Euler's theorem to get rid of derivatives on the fundamental tensor)

Let $M_{kj}$ be the cofactor of the element in row $k+1$ and column $j+1$ of the determinant $B_{11}(Lw, w, \tau, \tau)$. $P_{kij}$ be the minor obtained from $B_{11}(Lw,w,\tau \tau)$ by removing the first and $k+1$ row, and the $i+1$ and $j+1$ column.
It is then easy to see that $M_{kj} v^i - M_{ki} v^j = (-1)^{k+i+j} P_{kij}$, using this equality and apply divergence theorem we obtain the equation:

\begin{align*}
    \int_D (\sum_{i=1}^n C_i-H)\, dx & = \int_D \sum_{i,j,k=1}^n (-1)^i [ (-1)^{k+j} w_{x_j}w_{y_k} P_{kij}\sigma_{ij}]_{x_i}\, dx\\
    & = \int_S \sum_{i,j,k=1}^n (-1)^{k+j+i} w_{x_j}w_{y_k} P_{kij}\sigma_{ij}F_{x_i} \,dS_x   
\end{align*}

Since the integrand in integral over $D$ has a singularity at $x = y$ on the boundary, we need to justify the use of divergence theorem above. First take a small $U_\epsilon$ neighborhood around $y$ so that $D \setminus U_\epsilon$ has piecewise smooth boundary, apply divergence theorem to $\int_{D \setminus U_\epsilon} P_1 dx $ and let $\epsilon$ tends to zero, then the equality above is justified.

By expanding along the first and second row , and then the first column of $B(w,w,F,\tau)$, we can see that the integrand on the RHS of the equation above is just $ - B(w,w,F,\tau)$, and so the proof is complete.
\end{proof}

Throughout the proof we may have assumed the regularity of the metric $\mathcal{F}$ and $\lambda$ to be more regular than we initially assumed ($\mathcal{F} \in C^5$ and $\lambda \in C^3$). To justify the validity of the proof, let $\mathcal{F}, \lambda \in C^k$ for $k$ sufficiently high so the proof is valid. Clearly all terms in the identity \eqref{mainlemma2} are well defined for $\mathcal{F} \in C^5$ and $\lambda \in C^3$. Simply take a $C^k$ approximation of each term in the $C^5$ and $C^3$ norm for terms involving $\mathcal{F}$ and $\lambda$ proves the identity for $\mathcal{F} \in C^5$ and $\lambda \in C^3$.

By lemma 11 and 13, the proof of Lemma 5 is complete.



\bibliographystyle{plain}
\bibliography{references}

\end{document}